\date{\today}
\theoremstyle{plain}
\newtheorem{thm}{\protect\theoremname}
  \theoremstyle{definition}
  \newtheorem{defn}[thm]{\protect\definitionname}
  \theoremstyle{plain}
  \newtheorem{cor}[thm]{\protect\corollaryname}
  \theoremstyle{plain}
  \newtheorem{lem}[thm]{\protect\lemmaname}
  \theoremstyle{plain}
  \newtheorem{prop}[thm]{\protect\propositionname}
  \providecommand{\corollaryname}{Corollary}
  \providecommand{\definitionname}{Definition}
  \providecommand{\lemmaname}{Lemma}
\providecommand{\theoremname}{Theorem}
  \providecommand{\corollaryname}{Corollary}
  \providecommand{\definitionname}{Definition}
  \providecommand{\lemmaname}{Lemma}
  \providecommand{\propositionname}{Proposition}
\providecommand{\theoremname}{Theorem}
\begin{document}

\title{Path-by-path regularization by noise for scalar conservation laws}

\author{Khalil Chouk, Benjamin Gess}

\begin{abstract}
We prove a path-by-path regularization by noise result for scalar conservation laws. In particular, this proves regularizing properties for scalar conservation laws driven by fractional Brownian motion and generalizes the respective results obtained in \cite{GS14-2}. We introduce a new path-by-path scaling property which is shown to be sufficient to imply regularizing effects.  
\end{abstract}

\maketitle

\section{Introduction}

We prove regularity estimates for solutions to scalar conservation laws with rough flux of the type 
\begin{equation}\label{eq:intro_burgers}\begin{aligned}
\partial_{t}u+\sum_{i=1}^{d}\partial_{x_{i}}A^{i}(u)\circ\frac{\dd w_{t}^{i}}{\dd t}&=0\quad\text{on }\T^{d}\\
u(0)&=u_0\in L^\infty(\T^d),
\end{aligned}\end{equation}
where $\T^d$ is the $d$-dimensional torus, $w=(w^{1},...,w^{d})\in C([0,T];\R^{d})$ is a continuous function satisfying an irregularity condition and $A=(A^{1},...,A^{d})\in C^{2}(\mathbb{R},\mathbb{R}^{d})$ is supposed to satisfy a non-degeneracy condition detailed below. For the sake of simplicity, in the introduction we restrict to the model case 
\begin{equation}\label{eq:intro_burgers_3}
\partial_{t}u+ \frac{1}{2} \partial_{x}u^2\circ\frac{\dd w_{t}}{\dd t}=0\quad\text{on }\T.
\end{equation}

Regularizing effects of noise for scalar conservation laws of the type \eqref{eq:intro_burgers} 
have been first observed in \cite{GS14-2}, where the case of Brownian motion, that is, $w = \b = (\b^1,\dots,\b^d)$ being a standard Brownian motion has been considered. In this work it has been shown that bounded quasi-solutions $u$ to \eqref{eq:intro_burgers_3} satisfy $u \in L^1([0,T];W^{\l,1}(\T))$ for every $\l < \frac{1}{2}$, $\P$-a.s.. In contrast, in the deterministic case, that is $w(t)=t$ in \eqref{eq:intro_burgers_3}, it has been shown in \cite{DLW03} that there exist bounded quasi-solutions\footnote{We also refer to \cite{JP02} for the construction of entropy solutions to \eqref{eq:intro_burgers_3} with $w(t)=t$ and an $L^1$-forcing with limited regularity.} with $u \not\in L^1([0,T];W^{\l,1}(\T))$ for every $\l>\frac{1}{3}$. The analysis of \cite{GS14-2,LPS13-2} relies on probabilistic arguments making use of the fact that $\b$ has independent increments and of the scaling properties of Brownian motion. It is therefore not clear from \cite{GS14-2} if the regularizing effect of Brownian motion can be characterized in terms of its path properties. A partial answer has been given by the results of \cite{GG16} which imply, as a special case, that 
the \textit{entropy} solution to \eqref{eq:intro_burgers_3} satisfies a path-by-path estimate of the form
  $$\|u(t)\|_{W^{1,\infty}(\T)} \le \big(\max_{0\le s\le t} (w(s) - w(t)) \wedge (w(t) - \min_{0\le s\le t} w(s))\big)^{-1} ,$$
where $w$ is a continuous function. In particular, in the case of $w=\b$ being a Brownian motion this implies that $u(t)$ is Lipschitz continuous for all times $t>0$, $\P$-a.s.. An important subtlety in this result is that the $\P$-zero set depends on the time $t>0$. In fact, for $\P$-almost every fixed realization of the solution $u(\cdot,\omega)$ there will be times $t>0$ where shocks appear and thus $x\mapsto u(t,x,\omega)$ is not Lipschitz continuous. However, the type of regularizing effects used in \cite{GS14-2} and in \cite{GG16} are of different nature. While \cite{GS14-2} relies on averaging techniques and thus on an increased speed of averaging due to Brownian scaling, the effect in \cite{GG16} relies on (strict) convexity of the flux function and dependence of the direction of the flux on $w$. 
 
This leads to the two main questions addressed in this work: First, to classify the properties of the paths of the Brownian motion leading to the regularizing effect observed in \cite{GS14-2} and to thus obtain a better understanding of the interplay of the deterministic and stochastic averaging in this case. Second, is it possible to establish path-by-path versions of the results of  \cite{GS14-2} for a uniformly chosen $\P$-zero set, in particular, independent of the time $t>0$ (in contrast to \cite{GG16})? The purpose of the present paper is to positively answer both of these tasks.

In order to characterize sufficient properties for continuous paths implying regularizing effects in \eqref{eq:intro_burgers}, in the first part of this paper we will rely on the notion of $(\rho,\gamma)$-irregularity introduced in \cite{CG16}: A path $w\in C([0,T];\R^d)$ is said to be $(\rho,\gamma)$-irregular if 
\begin{equation}\label{eq:intro_rhogamma}
  \sup_{a\in\R^{d}}\sup_{0\le s<t\le T}(1+|a|)^{\rho}\frac{|\int_{s}^{t}e^{i\langle a,w_{r}\rangle}\dd r|}{|t-s|^{\g}}<\infty.
\end{equation}
Our results (cf.\ Theorem \ref{thm:main} below) applied to the special case of \eqref{eq:intro_burgers_3} with $w$ being an $\eta$-H\"older continuous, $(\rho,\gamma)$-irregular path and $u$ being a bounded quasi-solution yield the following path-by-path regularization by noise result 
\begin{equation}\label{eq:intro_est_0}
u\in L^{1}([0,T];W^{\l,1}(\T)),
\end{equation}
for all 
$$\l<\frac{\rho(\eta+1)-(1-\g)}{(\rho\vee 1)(\eta+1)+(1-\g)}\wedge \frac{\rho+2(\rho\vee 1)}{(\rho\vee 1)(2\eta+1)+(1-\g)}.$$
For example, the above result may be applied for $w$ given by $w=g+\b^{H}$, for any function $g\in C^{1}([0,T])$ and fractional Brownian motion $\b^{H}$ with Hurst parameter $H\in(0,1)$. Note that the arguments of \cite{GS14-2} could not handle the presence of a deterministic perturbation $g$ of $w=\b$. For $H\in (0,\frac{1}{2}]$ this yields that essentially bounded quasi-solutions to 
\begin{equation}
\partial_{t}u+\frac{1}{2}\partial_{x}u^2\circ\frac{\dd \b^H_{t}}{\dd t}+\frac{\dot{g}_{t}}{2}\partial_{x}u^{2}=0\quad\text{on }\T,\label{eq:intro_burgers_pert}
\end{equation}
$\P$-a.s.~have regularity of the type 
\begin{equation}\label{eq:intro_est}
u\in L^{1}([0,T];W^{\l,1}(\T)),\quad\forall \l<\frac{1}{1+2H}.
\end{equation}
We first note that in the case of Brownian motion, that is $H=\frac{1}{2}$, we fully recover the probabilistic estimate given in \cite{GS14-2}, i.e.\ $u\in L^{1}([0,T];W^{\l,1}(\T))$ for all $\l<\frac{1}{2}$. Hence, the path-by-path estimates developed in this paper are as good as the probabilistic estimates from \cite{GS14-2}, which is somewhat surprising in view of the fact that the path-by-path theory for SDE developed in \cite{CG16} cannot reproduce the probabilistic results.  In particular, \eqref{eq:intro_est} shows that for $H\in(0,\frac{1}{2}]$ bounded quasi-solutions to \eqref{eq:intro_burgers_pert} are more regular than quasi-solutions in the deterministic case. The $\P$-zero set in \eqref{eq:intro_est} is universal\footnote{Note that via \eqref{eq:intro_rhogamma} the zero set may depend on $g$.}, that is, it does not depend on the initial datum of the problem \eqref{eq:intro_burgers_pert}, nor on the flux $A$ and it is characterized in terms of the path property \eqref{eq:intro_rhogamma}.

We further note that $\l<\frac{1}{1+2H}\uparrow 1$ for $H\downarrow0$. Hence, a higher irregularity of the driving path induces a stronger regularizing effect. In particular, this allows us to analyze the interplay in between the degeneracy behavior of the flux $A$ in \eqref{eq:intro_burgers} and the irregularity of the path $w$ (cf.\ Remark \ref{rmk:interplay} below). In the case of entropy solutions, \eqref{eq:intro_est} implies 
\[
u(t)\in W^{\l,1}(\T),\quad\forall t>0,\ \l<\frac{1}{1+2H}.
\]
Hence, in the limit $H\downarrow0$ we recover the optimal regularity estimate $u(t)\in W^{1-\ve,1}(\T)$ for $\ve>0$. Indeed, note that for each fixed $H\in(0,1)$ shocks still appear an thus $u(t) \not\in W^{1,1}(\T)$ for some $t > 0$. 

The proof of the first main result (Theorem \ref{thm:main}) makes use of both the $\rho$ and $\gamma$ indices in the definition of  $(\rho,\gamma)$-irregularity. In contrast, an inspection of the proof of \cite{GS14-2} unveils that apart from independence of increments only the scaling property of Brownian motion is used. This suggests that the condition \eqref{eq:intro_rhogamma} may not be optimal in this setting. In addition, the condition \eqref{eq:intro_rhogamma} is not easy to check in examples (cf.\ \cite{CG16}). This motivates the second part of this work. In this part we develop a second proof, quite different from \cite{GS14-2}, of regularity of solutions to \eqref{eq:intro_burgers_3}. This second approach avoids the use of Fourier transformations, relying on real analysis only. As a consequence, this allows to replace the condition \eqref{eq:intro_rhogamma} by a new path-by-path condition for $w$ which is close to a pure scaling condition: Assume that there is a $\iota\in[\frac{1}{2},1]$ such that for every $\a\in(-1,0)$ and $\l\ge1$ we have
\begin{equation}
\int_0^Tdr\,\int_{0}^{T-r}dt\,e^{-\l t}|w_{t}^{r}|^{\a}\lesssim \l^{-1-\iota\a}.\label{eq:intro_real_noise_cdt}
\end{equation}
It is not difficult to see that $(\g,\rho)$-irregularity implies \eqref{eq:intro_real_noise_cdt}. In addition, the verification of \eqref{eq:intro_real_noise_cdt} with $\iota>H$ for fractional Brownian motion requires only a few lines of proof.
 Under assumption \eqref{eq:intro_real_noise_cdt} for $w$ being $\eta$-H\"older continuous we prove that bounded quasi-solutions to \eqref{eq:intro_burgers_3} satisfy
\begin{equation}\label{eq:intro_est_2}
u\in L^{1}_{loc}((0,T);W^{\l,1}(\T)),\quad\forall \l<\frac{1+\eta-\iota}{1+\eta+\iota },
\end{equation}
which, in the case of fractional Brownian motion $w=\b^H$ recovers \eqref{eq:intro_est}.

\subsubsection*{Structure of the paper}
The first main result, Theorem \ref{thm:main}, relying on $(\rho,\gamma)$-irregularity is stated and proved in Section \ref{sec:main}. In Section \ref{sec:real} the path-by-path scaling condition \eqref{eq:intro_real_noise_cdt} is introduced, verified for fractional Brownian motion and an alternative proof of regularity of solutions to \eqref{eq:intro_burgers_3} is given in Theorem \ref{thm:2nd}.

\subsection*{Notation}

For a function $w:[0,T]\to\R^d$ we let $w_{t}:=w(t)$ and $w_{t}^{s}=w_{s+t}-w_{s}$. Given two vectors $a,b\in\R^d$ we define $(a_. b)_i = a_ib_i$ to be the componentwise product and $\langle a,b \rangle$ to be the inner product. We will often use the shorthand notation $\int_v\ \psi(v) = \int_\R dv\ \psi(v) $. We let $\mcM([0,T]\times\R^d\times\R)$ be the space of all locally finite Radon measures on $[0,T]\times\R^d\times\R$ and $\mcM_{TV}([0,T]\times\R^d\times\R)$ be the subspace of measures $m$ with finite total mass $\|m\|_{TV}$. Further, let $C^\eta([0,T];\R^d)$ be the space of $\eta$-H\"older continuous functions with norm $\|\cdot\|_\eta$ and $BV_v = BV(\R_v)$ be the space of functions in $L^1_v$ of bounded variation. We will often use the convention $L^p_{t,x} := L^p([0,T]\times \R^d)$, $L^p_{t,x}L^q_v := L^p([0,T]\times \R^d_x;L^q(\R_v))$ and analogously for Sobolev spaces and spaces of measures. For $s \in \R$, $p\ge 1$ we let $\dot{W}_{x}^{\s,p}$, $W_{x}^{\s,p}$ denote the homogeneous and inhomogeneous Sobolev spaces respectively and we set $H^{s} := W^{s,2}$. For a distribution $f$ on $\R^d$ we let $\hat{f}:=\mcF f $ be its Fourier transform. The Fourier transform is taken with respect to the $x$-variable unless specified otherwise. We follow the usual notational conventions concerning real interpolation along the lines of \cite{BL76}.

\section{Main result}\label{sec:main}

As suggested in \cite{LPS13,LPS14}, entropy solutions to \eqref{eq:intro_burgers} are defined by passing to the kinetic form. That is, setting 
\[
\chi(u,v)=\begin{cases}
1 & \text{if }0<v<u\\
-1 & \text{if }u<v<0\\
0 & \text{otherwise }
\end{cases}
\]
and $\chi(t,x,v):=\chi(u(t,x),v)$ we informally obtain that $\chi$ satisfies 
\begin{align}
\partial_{t}\chi(t,x,v)+\sum_{i=1}^{d}a^{i}(v)\partial_{x_{i}}\chi(t,x,v)\circ\frac{\dd w_{t}^{i}}{\dd t} & =\partial_{v}m(x,t,v),\label{eq:intro_burgers_2}\\
\chi(0,x,v) & =\chi^{0}(x,v).\nonumber 
\end{align}
In order to pass to a robust form, that is, to a form that makes sense for all continuous functions $w$, we test by testfunctions transported along the characteristics. That is, for any given test-function $\psi$ we have 
\begin{equation}\begin{aligned}\label{eq:def}
\int_{x,v}\,\chi(t,x,v)\psi(x-a(v){}_{.}w_{t},v) & =\int_{x,v}\,\chi(0,x,v)\psi(x,v)\\&\quad\quad-\int_{x,v,r}\,m(r,x,v)\partial_{v}(\psi(x-a(v){}_{.}w_{r},v)),\\
\chi(0,x,v) & =\chi^{0}(x,v).
\end{aligned}\end{equation}
This leads to 
\begin{defn}
   A map $u \in L^\infty([0,T]\times\T^d)\cap C([0,T];L^1(\T^d))$ is said to be a quasi-solution to \eqref{eq:intro_burgers} if there is a finite Radon measure $m$ on $[0,T]\times\T^d\times\R$ such that \eqref{eq:def} is satisfied for all $\psi\in C^\infty_c(\T^d,\times\R)$. If $m$ is a non-negative measure, then $u$ is called an entropy solution.
\end{defn}
We refer to \cite{LPS13} for the well-posedness of entropy solutions to  \eqref{eq:intro_burgers} and note that the proof given there for the Cauchy problem can be applied to the torus without essential change.

We aim to estimate the regularity of the solution $u$ based on averaging techniques. Concerning the general setup of the proof we follow \cite{GS14-2} which in turn adapted arguments of \cite{DV13}. However, in \cite{GS14-2} probabilistic arguments were used leading to an estimate in expectation only. In order to avoid such a probabilistic argument different estimates have to be found that allow to unveil the relation to $(\rho,\g)$-irregularity of paths. Informally, \eqref{eq:intro_burgers_2} is equivalent to 
\[
\partial_{t}\chi(t,x,v)+\langle a(v){}_{.}\nabla_{x}\chi(t,x,v),\circ\frac{\dd w_{t}}{\dd t}\rangle+\Delta^{\a}\chi(t,x,v)=\Delta^{\a}\chi(t,x,v)+\partial_{v}m(t,x,v).
\]
Passing to Fourier modes in $x$ we have 
\[
\partial_{t}\hat{\chi}(t,n,v)+i\langle n{}_{.}a(v)\hat{\chi}(t,n,v),\circ\frac{\dd w_{t}}{\dd t}\rangle+|n|^{2\a}\hat{\chi}(t,n,v)=|n|^{2\a}\hat{\chi}(t,n,v)+\partial_{v}\hat{m}(t,n,v).
\]
Hence, by a change of variable and setting $b_{n}=|n|^{2\alpha}$, 
\begin{align*}
\hat{\chi}(t,n,v)= & e^{i\langle a(v){}_{.}n,w_{t}\rangle-b_{n}t}\hat{\chi}(0,n,v)+\int_{0}^{t}ds\,e^{i\langle a(v){}_{.}n,w_{t}-w_{s}\rangle-b_{n}(t-s)}b_{n}\hat{\chi}(s,n,v)\\
 & +\int_{0}^{t}ds\,e^{i\langle a(v){}_{.}n,w_{t}-w_{s}\rangle-b_{n}(t-s)}\partial_{v}\hat{m}(s,n,v).
\end{align*}
Integrating in $v$ yields 
\begin{align}
\hat{u}(t,n)= & \int_{v}e^{i\langle a(v){}_{.}n,w_{t}\rangle-b_{n}t}\hat{\chi}(0,n,v)+\int_{v}\int_{0}^{t}ds\,e^{i\langle a(v){}_{.}n,w_{t}-w_{s}\rangle-b_{n}(t-s)}b_{n}\hat{\chi}(s,n,v)\nonumber \\
 & +\int_{v}\int_{0}^{t}ds\,e^{i\langle a(v){}_{.}n,w_{t}-w_{s}\rangle-b_{n}(t-s)}\partial_{v}\hat{m}(s,n,v)\label{eq:decomp}\\
=: & \hat{u}{}^{0}(t,n)+\hat{u}{}^{1}(t,n)+\hat{u}{}^{2}(t,n).\nonumber 
\end{align}
Hence, in the sense of distributions,
\begin{equation}\label{eq:decomp_u}
  u=u^0+u^1+u^2,
\end{equation}
with $u^i$ defined via  \eqref{eq:decomp}.

In the proof, the regularity of each $u^{i}$ will be estimated separately. The above stated decomposition of $\hat{u}(t,n)$ suggests that the possible regularizing properties of driving paths could be related to the behaviour of the appearing oscillating integrals. Motivated by this observation and \cite{CG16} we introduce 
\begin{equation}
\Phi_{s,t}^{w}(a):=\int_{s}^{t}e^{i\langle a,w_{r}\rangle}\dd r\label{eq:def_Phi}
\end{equation}
and recall the following notion of irregularity of a path. 
\begin{defn}
\label{def:Irregularity}Let $\rho,\gamma>0$ and $w\in C([0,T];\R^{d})$. Then $w$ is $(\rho,\gamma)$-irregular if 
\[
\|\Phi^{w}\|_{W^{\rho,\gamma}([0,T])}=\|\Phi^{w}\|_{\rho,\gamma}:=\sup_{a\in\R^{d}}\sup_{0\le s<t\le T}(1+|a|)^{\rho}\frac{|\Phi_{s,t}^{w}(a)|}{|t-s|^{\g}}<\infty.
\]
We say that $w$ is $\rho$-irregular if there exists $\g>1/2$ such that $w$ is $(\rho,\g)$-irregular.
\end{defn}

We will impose that the function $a:=A'=(a^{1},a^{2},...,a^{d})\in C^{1}(\mathbb{R},\mathbb{R}^{d})$ satisfies a non-degeneracy condition of the form
\begin{equation}
\inf_{e=(e_{1},...,e_{d}))\in\mathbb{R}^{d}}\max_{i=1,...,d}|e_{i}(a^{i}(v_{2})-a^{i}(v_{1}))|\geq c|v_{2}-v_{1}|^{\nu}\label{eq:a_nondeg}
\end{equation}
for some fixed $\nu\geq1$, $c>0$ and all $v_1,v_2 \in \R$. As in \cite{GS14-2} this non-degeneracy condition is weaker than the standard non-degeneracy condition used in the deterministic case (cf.\ e.g.\ \cite{JP02}). Indeed, we may choose $A(u)=(\frac{1}{2} u^2,\dots,\frac{1}{2} u^2)$ to get \eqref{eq:a_nondeg} with $\nu=1$, whereas this choice of $A$ does not satisfy the usual deterministic non-degeneracy condition (see \cite{GS14-2} for more details). Our main result is: 
\begin{thm}
\label{thm:main}Let $w\in C^{\eta}([0,T],\mathbb{R}^{d})$ for some $\eta>0$ be $(\rho,\g)$-irregular and let $u$ be a quasi-solution to \eqref{eq:intro_burgers}. Assume that $a$ satisfies \eqref{eq:a_nondeg} for some $\nu\ge1$. Then, for all $T>0$ and all 
$$\l  <\frac{\rho(\eta+1)-(1-\g)}{(\nu\rho\vee 1)(\eta+1)+(1-\g)}\wedge \frac{\rho+2(\nu\rho\vee 1)}{(\nu\rho\vee 1)(2\eta+1)+(1-\g)},$$
we have 
\[
\int_{0}^{T}dt\,\|u(t)\|_{W^{\l,1}}\le C(\|u^{0}\|_{L_{x}^{1}}+\|u\|_{L_{t,x}^{1}}+\|w\|_{\eta}\|a'(v)m\|_{TV}),
\]
for some constant $C=C(\|\Phi^{w}\|_{\rho,\gamma})$. If $u$ is an entropy solution, then, in addition, 
\[
\|u(t)\|_{W^{\l,1}}<\infty\quad\text{for all }t>0.
\]
\end{thm}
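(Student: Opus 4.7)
My plan is to estimate each of the three pieces $u^{0},u^{1},u^{2}$ of the Fourier decomposition \eqref{eq:decomp_u} separately and then combine them via an $L^{2}$-based interpolation inequality bounding $\|u\|_{L^{1}_{t}W^{\l,1}_{x}}$. Each $\hat u^{i}(t,n)$ has the schematic form $\int_{v}\int_{s}e^{i\langle a(v)\cdot n,\,w_{t}-w_{s}\rangle}e^{-b_{n}(t-s)}(\cdots)\,ds\,dv$ with $b_{n}=|n|^{2\a}$, and the strategy is to extract decay in $|n|$ simultaneously from the $t$-oscillation, via the $(\rho,\g)$-irregularity of $w$, and from the $v$-oscillation, via the non-degeneracy \eqref{eq:a_nondeg} of $a$.

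\textbf{Two sub-estimates.} The first sub-estimate is obtained by integrating by parts against $\Phi^{w}_{0,\cdot}(\xi)$, whose $r$-derivative is $e^{i\langle\xi,w_{r}\rangle}$. Combined with Definition~\ref{def:Irregularity}, this yields for every $\a\ge 0$ and $b>0$
\[
\Big|\int_{0}^{t}e^{i\langle\xi,w_{r}\rangle}(t-r)^{\a}e^{-b(t-r)}\,dr\Big|\lesssim\|\Phi^{w}\|_{\rho,\g}(1+|\xi|)^{-\rho}\,b^{-(\g+\a)}.
\]
The second sub-estimate replaces the probabilistic averaging of \cite{GS14-2}: the non-degeneracy \eqref{eq:a_nondeg} combined with a level-set argument --- interpolated against the trivial bound, which is binding when $\nu\rho<1$ --- yields a weighted $v$-estimate with $|n|$-decay of rate $\rho/(\nu\rho\vee 1)$. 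This is the origin of the factor $\nu\rho\vee 1$ in the theorem.

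\textbf{Application and optimization.} Combining the two sub-estimates in each piece produces the schematic Fourier-side bounds
\[
\begin{aligned}
|\hat u^{0}(t,n)|&\lesssim b_{n}^{-\g}\,|n|^{-\rho/(\nu\rho\vee 1)}\,\|u^{0}\|_{L^{1}_{x}},\\
|\hat u^{1}(t,n)|&\lesssim b_{n}^{1-\g}\,|n|^{-\rho/(\nu\rho\vee 1)}\,\|u\|_{L^{1}_{t,x}},\\
|\hat u^{2}(t,n)|&\lesssim b_{n}^{-(\g+\eta)}\,|n|^{1-\rho/(\nu\rho\vee 1)}\,\|w\|_{\eta}\,\|a'(v)m\|_{TV},
\end{aligned}
\]
where the $u^{1}$-bound applies the oscillatory sub-estimate with $\hat\chi(s,n,v)$ as a BV-in-$s$ weight, and the $u^{2}$-bound uses an integration by parts in $v$ that transfers $\partial_{v}$ from $\hat m$ onto the exponential, producing the extra factor $|a'(v)\cdot n\,(w_{t}-w_{s})|\le|n|\,\|w\|_{\eta}(t-s)^{\eta}$ to which the $\a=\eta$ version of the oscillatory bound applies. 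Plugging $b_{n}=|n|^{2\a}$ into a Bernstein-type interpolation for $\|u\|_{L^{1}_{t}W^{\l,1}_{x}}$ and optimizing $\a\in[0,1]$ yields two binding constraints on $\l$: balancing $u^{0}+u^{1}$ against $(1+|n|)^{\l}$ produces the first quantity in the $\wedge$ of the theorem, while balancing $u^{2}$ against $(1+|n|)^{\l}$ produces the second; the $(1-\g)$ contributions in the denominators arise from the interaction of the interpolation step with the $b_{n}^{\pm\g}$ factors.

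\textbf{Main obstacle.} The central difficulty is the simultaneous bookkeeping of the five exponents $\rho,\g,\eta,\nu,\a$ across the three pieces; in particular, the $u^{1}$-bound forces the oscillatory sub-estimate to be applied with a merely BV-in-$s$ weight, which couples $\g$ and $\a$ in a subtle way. The borderline case $\nu\rho=1$, where the level-set bound and the trivial bound match in strength, underlies the $\nu\rho\vee 1$ factor, and handling the corresponding endpoint interpolation cleanly across all three pieces will be the most delicate technical point.
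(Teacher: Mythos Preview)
Your plan has the right overall architecture (the decomposition \eqref{eq:decomp_u} and the two ingredients: time-oscillation via $(\rho,\gamma)$-irregularity and $v$-averaging via non-degeneracy), but the way you propose to combine them contains a genuine gap. The non-degeneracy hypothesis \eqref{eq:a_nondeg} controls only \emph{differences} $a(v_1)-a(v_2)$, and the $(\rho,\gamma)$-irregularity bound is for a clean time integral $\int e^{i\langle\xi,w_r\rangle}\,dr$. The paper obtains both simultaneously by \emph{squaring} $\hat u^{i}(t,n)$ and integrating in $t$: this produces the double $v$-integral with weight $(1+|(a(v_1)-a(v_2))\cdot n|^{\rho})^{-1}$ (to which Lemma~\ref{lemma:avr-int} applies) and the time integral $\Psi^{w}_{s,T}((a(v_1)-a(v_2))\cdot n,b_n)$ (to which Lemma~\ref{lem:k-est} applies). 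Your pointwise-in-$t$ route avoids squaring and instead applies the oscillatory bound to the $s$-integral inside $\hat u^{1}$ with ``$\hat\chi(s,n,v)$ as a BV-in-$s$ weight''. But $\hat\chi(\cdot,n,v)$ is \emph{not} known to be BV in time for a quasi-solution --- all you have is $\chi\in L^{\infty}_{t,x,v}$ --- so the Abel/integration-by-parts step that would be needed here is unjustified. The same issue hits your $u^{2}$-bound: $m$ is only a finite measure in $s$, so the oscillatory sub-estimate (which is for Lebesgue integrals) cannot be applied against $\hat m(ds,n,v)$. In the paper the $u^{2}$-piece is handled \emph{without} using $(\rho,\gamma)$-irregularity at all, via a pure semigroup smoothing bound $\|DS^{*}_{A(v)}(s,t)(-\Delta)^{\lambda/2}\varphi\|_{\infty}\lesssim (t-s)^{-(\lambda+1)/(2\alpha)}\|\varphi\|_{\infty}$ together with the $\eta$-H\"older continuity of $w$.

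There is a second, independent gap: you omit the bootstrap. Even granting your schematic Fourier bounds, a single optimization in $\alpha$ gives only $\lambda<\frac{\rho(\eta+1)-(1-\gamma)}{2(\eta+1)+(1-\gamma)}$ (for $\nu\rho<1$), not the claimed $\frac{\rho(\eta+1)-(1-\gamma)}{(\eta+1)+(1-\gamma)}$. The paper closes this gap by writing the $u^{1}$-estimate in the form $\|u^{1}\|_{L^2_tH^{\frac{\rho-2\alpha(2-\theta)}{2}+\tau}_x}\lesssim\|\chi\|_{L^2_{t,v}H^{\tau}_x}$, feeding back $\tau=\lambda_n/2$ from the previous step, and iterating to the fixed point $\lambda_*$. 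The side-constraint $\tau\le\alpha$ in this iteration is exactly what produces the second term $\frac{\rho+2(\nu\rho\vee1)}{(\nu\rho\vee1)(2\eta+1)+(1-\gamma)}$ in the $\wedge$; your ``balancing $u^{2}$'' does not. Finally, the case $\nu\rho\ge 1$ is not handled by interpolating the level-set estimate against the trivial bound as you suggest, but by first trading decay for time-H\"older exponent in $\Phi^{w}$ (Lemma~\ref{lemma:interpol}) so as to reduce to $\tilde\rho$ with $\nu\tilde\rho<1$.
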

By \cite[Theorem 1.7]{CG16} for a fractional Brownian motion $\b^{H}$ with Hurst parameter $H\in(0,1)$ and a H\"older continuous path $g\in C^{\a}([0,T])$ for some $\a\in(0,1)$, we have that $\P$-a.s.~$t\mapsto\b_{t}^{H}(\omega)+g_{t}$ is $(\rho,\gamma)$-irregular for all $\rho<\frac{1}{2H}$ and some $\gamma>\frac{1}{2}$ and $\b_{\cdot}^{H}(\omega)\in C^{\eta}([0,T])$ for every $\eta\in(0,H)$. Hence, an application of Theorem \ref{thm:main} yields the following: 
\begin{cor}
\label{cor:fbm}Assume that $A$ satisfies \eqref{eq:a_nondeg} for some $\nu\ge1$. Let $\b^{H}$ be a fractional Brownian motion in $\R^{d}$ with Hurst parameter $H\in(0,1)$, $g\in C^{\a}([0,T])$ for some $\a\in[H,1)$ and $u$ be a quasi-solution to 
\begin{equation}
\partial_{t}u+\sum_{j=1}^{d}\partial_{x_{j}}A^{j}(u)\circ\frac{\dd\b_{t}^{H,j}}{\dd t}+\sum_{j=1}^{d}\partial_{x_{j}}A^{j}(u)\circ\frac{\dd g}{\dd t}=0\quad\text{on }\T^{d}.\label{eq:intro_burgers-1}
\end{equation}
Then, for all $\l<\frac{1}{(\nu\vee2H)(H+1)+H}$, 
\begin{equation}
u\in L_{t}^{1}W_{x}^{\l,1}.\label{eq:reg_fbm}
\end{equation}
In particular, for 
\begin{equation}
\partial_{t}u+\frac{1}{2}\partial_{x}u^{2}\circ\frac{\dd\b_{t}^{H}}{\dd t}+\frac{1}{2}\partial_{x}u^{2}=0\quad\text{on }\T\label{eq:intro_burgers-1-1}
\end{equation}
and $H\le\frac{1}{2}$ we get \eqref{eq:reg_fbm} for all $\l<\frac{1}{1+2H}$. 
\end{cor}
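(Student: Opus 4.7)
The plan is to apply Theorem \ref{thm:main} directly to the $d$-dimensional driver
\[
w:=(\b^{H,1}+g,\ldots,\b^{H,d}+g)\in C([0,T];\R^{d}).
\]
Since the Stratonovich formulation is linear in the driver, equation \eqref{eq:intro_burgers-1} is precisely \eqref{eq:intro_burgers} with this $w$, so the two notions of quasi-solution coincide. The first step is to verify the two hypotheses of Theorem \ref{thm:main}. Fix any $\eta<H$. Then $w\in C^{\eta}([0,T];\R^{d})$ almost surely, since $\b^{H}(\omega)\in C^{\eta}$ for a.e.~$\omega$ and $g\in C^{\a}\subset C^{\eta}$ (using $\a\ge H>\eta$). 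Moreover, by \cite[Theorem 1.7]{CG16} recalled just before the statement of the corollary, for every $\rho<\frac{1}{2H}$ there exists $\g>\frac{1}{2}$ so that $w$ is $\P$-a.s.~$(\rho,\g)$-irregular.

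Applying Theorem \ref{thm:main} with these parameters then gives $u\in L^{1}_{t}W^{\l,1}_{x}$ for every
\[
\l<\frac{\rho(\eta+1)-(1-\g)}{(\nu\rho\vee 1)(\eta+1)+(1-\g)}\wedge\frac{\rho+2(\nu\rho\vee 1)}{(\nu\rho\vee 1)(2\eta+1)+(1-\g)}.
\]
What remains is the optimisation of this bound, which is carried out by sending $\rho\uparrow\frac{1}{2H}$, $\eta\uparrow H$ and $1-\g\uparrow\frac{1}{2}$ simultaneously. Noting the identity $\nu\rho\vee 1=(\nu\vee 2H)/(2H)$ at the limit $\rho=\frac{1}{2H}$, a direct computation shows that the first fraction tends to
\[
\frac{(H+1)/(2H)-1/2}{(\nu\vee 2H)(H+1)/(2H)+1/2}=\frac{1}{(\nu\vee 2H)(H+1)+H},
\]
which is the target rate. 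A short algebraic check, splitting according to $\nu\ge 2H$ and $\nu<2H$, shows that the second fraction exceeds the first in the same limit (the difference reducing to $\nu H+2\nu^{2}(H+1)\ge 0$ and $8H^{3}+10H^{2}\ge 0$, respectively), so the second fraction never binds and the first fraction delivers the announced threshold.

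The ``in particular'' statement for \eqref{eq:intro_burgers-1-1} then follows by specialising to $\nu=1$ and $H\le\frac{1}{2}$: in this range $\nu\vee 2H=1$, and hence $\l<\frac{1}{(H+1)+H}=\frac{1}{1+2H}$, as claimed. The argument is essentially a direct invocation of Theorem \ref{thm:main} together with the $(\rho,\g)$-irregularity of the perturbed fractional Brownian motion from \cite{CG16}; there is no serious conceptual obstacle, and the only mildly delicate step is the book-keeping needed to track $\nu\rho\vee 1$ correctly through the two limiting regimes.
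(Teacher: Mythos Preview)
Your proposal is correct and follows exactly the paper's approach: the paper simply states, just before the corollary, that by \cite[Theorem~1.7]{CG16} the path $\b^{H}+g$ is $(\rho,\gamma)$-irregular for every $\rho<\frac{1}{2H}$ with some $\gamma>\frac{1}{2}$ and $\b^{H}\in C^{\eta}$ for every $\eta<H$, and then says ``Hence, an application of Theorem~\ref{thm:main} yields the following''. Your write-up fleshes out the omitted optimisation; the only cosmetic quibble is that ``sending $1-\g\uparrow\tfrac12$'' is really taking the worst-case bound on $\gamma$ (since $\gamma$ is merely guaranteed to exceed $\tfrac12$ and is not a free parameter), but since the first fraction is monotone increasing in $\gamma$ this pessimistic substitution is precisely what is needed.
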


\begin{remark}\label{rmk:interplay} Corollary \ref{cor:fbm} allows to analyze the interplay of non-degeneracy of the flux and irregularity of the noise. Indeed, for two choices $H_i \le \frac{\nu_i}{2}$ we get the same order of regularity as long as 
\[
\nu_{2}(H_{2}+1)+H_{2}=\nu_{1}(H_{1}+1)+H_{1}.
\]
Hence, a higher order degeneracy $\nu$ of the flux may be compensated by a more irregular path.
\end{remark}

\begin{remark} In \cite{GS14-2} it was initially claimed a regularity of order $\l<\frac{4}{5}$ in \eqref{eq:reg_fbm}, later corrected to $\l<\frac{1}{2}$ in \cite{GS17} with the same proof.   
\end{remark}

\subsection{Proof of the main Theorem}

In the following we set, for $a\in\R^{d}$, $b\in\R$, $w\in C([0,T];\R^{d})$, $s,t\ge\text{0}$, 
\[
\Psi_{s,t}^{w}(a,b)=\int_{s}^{t}e^{i\langle a,w_{r}\rangle-2br}\dd r.
\]
We note that 
\begin{align}
\Psi_{s,t}^{w}(a,b) & =\int_{s}^{t}e^{i\,\langle a,w_{r}\rangle-2br}\dd r\nonumber 
=\int_{0}^{t-s}e^{i\langle a,w_{r+s}-2b(r+s)\rangle}\dd r\nonumber \\
 & =e^{i\langle a,w_{s}\rangle-2bs}\int_{0}^{t-s}e^{i\langle a,w_{r+s}-w_{s}\rangle-2br}\dd r\label{eq:phi-shift}\\
 & =e^{i\langle a,w_{s}\rangle-2bs}\Psi_{0,t-s}^{w^{s}}(a,b).\nonumber 
\end{align}
We note that 
\begin{equation}
\|\Phi^{w^{\tau}}\|_{W^{\rho,\gamma}([S;T])}=\|\Phi^{w}\|_{W^{\rho,\gamma}([S+\tau;T+\tau])}.\label{eq:shifting_irregularity}
\end{equation}
The regularity estimate will rely on an estimate of

\[
K^{w}(a,b):=\sup_{s\in[0,T]}|\Psi_{0,T-s}^{w^{s}}(a,b)|.
\]

\begin{lem}
\label{lem:k-est}Assume that $w$ is $(\rho,\g)$ -irregular. Then, for any $\theta<\gamma+1$ and $|b|\geq1$, $a\in\R^{d}$, 
\begin{align*}
|\Psi_{0,T-s}^{w^{s}}(a,b)| & \lesssim\|\Phi^{w}\|_{W^{\rho,\gamma}([0,T])}\frac{|b|^{1-\t}}{(|a|^{\rho}+1)},
\end{align*}
and 
\[
|K^{w}(a,b)|\lesssim\|\Phi^{w}\|_{W^{\rho,\gamma}([0,T])}\frac{|b|^{1-\t}}{(|a|^{\rho}+1)}.
\]
\end{lem}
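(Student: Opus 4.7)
The plan is to exploit $(\rho,\gamma)$-irregularity via integration by parts, trading the oscillating integral $\Phi^{w^s}$ (whose size is controlled by the norm $\|\Phi^w\|_{\rho,\gamma}$) against the exponential damping $e^{-2br}$.

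First I would use the shifting identity \eqref{eq:shifting_irregularity} to note
$$\|\Phi^{w^s}\|_{W^{\rho,\gamma}([0,T-s])}\;\le\;\|\Phi^{w}\|_{W^{\rho,\gamma}([0,T])},$$
so that the primitive
$$F(r):=\Phi^{w^{s}}_{0,r}(a)=\int_{0}^{r}e^{i\langle a,w^{s}_{u}\rangle}\,\dd u,\qquad F(0)=0,$$
satisfies the pointwise bound
$$|F(r)|\;\le\;\|\Phi^{w}\|_{\rho,\gamma}\,(1+|a|)^{-\rho}\,r^{\gamma},\qquad r\in[0,T-s].$$
Next, since $F'(r)=e^{i\langle a,w^{s}_{r}\rangle}$ in the sense of distributions, an integration by parts in the defining integral of $\Psi^{w^s}_{0,T-s}(a,b)$ against the smooth factor $e^{-2br}$ gives
$$\Psi^{w^{s}}_{0,T-s}(a,b)\;=\;F(T-s)\,e^{-2b(T-s)}\;+\;2b\int_{0}^{T-s}F(r)\,e^{-2br}\,\dd r.$$

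The remaining step is to plug in the bound on $|F(r)|$ and to estimate the two elementary integrals
$$\sup_{r\ge 0}r^{\gamma}e^{-2br}\;\lesssim\;|b|^{-\gamma}\quad\text{and}\quad\int_{0}^{\infty}r^{\gamma}e^{-2br}\,\dd r\;=\;\frac{\Gamma(\gamma+1)}{(2b)^{\gamma+1}},$$
both of which hold uniformly for $|b|\ge 1$. Combining these yields
$$|\Psi^{w^{s}}_{0,T-s}(a,b)|\;\lesssim\;\|\Phi^{w}\|_{\rho,\gamma}\,(1+|a|)^{-\rho}\,|b|^{-\gamma},$$
which is strictly stronger than the claimed bound with exponent $1-\theta$ for any $\theta<\gamma+1$; to match the statement, simply note $|b|^{-\gamma}\le |b|^{1-\theta}$ whenever $|b|\ge 1$ and $\theta\le\gamma+1$. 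Since the right-hand side is independent of $s$, taking the supremum over $s\in[0,T]$ immediately transfers the estimate to $K^{w}(a,b)$.

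There is no real obstacle here: once the integration by parts is written down, the argument reduces to the two standard one-dimensional estimates above, and the $a$-dependence is carried entirely by the uniform bound on $F$ coming from $(\rho,\gamma)$-irregularity. The only minor point worth flagging is the strict inequality $\theta<\gamma+1$ in the statement, which is present only to allow a clean bound $|b|^{1-\theta}$; the proof in fact gives the endpoint $\theta=\gamma+1$ directly.
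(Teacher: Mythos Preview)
Your proof is correct and follows the same approach as the paper's: integration by parts expresses $\Psi$ in terms of $\Phi$, the $(\rho,\gamma)$-irregularity bound controls $\Phi$, and elementary one-dimensional estimates finish. The only cosmetic difference is that the paper bounds $e^{-x}\lesssim x^{-\theta}$ before integrating (hence the strict inequality $\theta<\gamma+1$ needed for $\int_0^{T-s}t^{\gamma-\theta}\,\dd t<\infty$), whereas you evaluate the Gamma integral directly and obtain the endpoint $|b|^{-\gamma}$.
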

\begin{proof}
We observe that, by integration by parts, 
\[
\Psi_{0,T-s}^{w^{s}}(a,b)=\Phi_{0,T-s}^{w^{s}}(a)e^{-2b(T-s)}+2b\int_{0}^{T-s}\Phi_{0,t}^{w^{s}}(a)e^{-2bt}\dd t,
\]
where $\Phi$ is defined as in \eqref{eq:def_Phi}. Thus, using $(\rho,\g)$ irregularity of $w$ we obtain that 
\[
|\Phi_{0,T-s}^{w^{s}}|=|\Phi_{s,T}^{w}(a)|\leq\|\Phi^{w}\|_{W^{\rho,\gamma}([0,T])}\frac{|T-s|^{\gamma}}{1+|a|^{\rho}},\quad s\in[0,T]
\]
and 
\[
|\Phi_{0,t}^{w^{s}}|\lesssim\|\Phi^{w}\|_{W^{\rho,\gamma}([0,T])}\frac{t^{\gamma}}{1+|a|^{\rho}}.
\]
Thus, for $\theta<\gamma+1$, 
\begin{align*}
|\Psi_{0,T-s}^{w^{s}}(a,b)|\lesssim & |\Phi_{0,T-s}^{w^{s}}(a)|e^{-2b(T-s)}+2|b|\int_{0}^{T-s}|\Phi_{0,t}^{w^{s}}(a)|e^{-2bt}\dd t,\\
\lesssim & \|\Phi^{w}\|_{W^{\rho,\gamma}([0,T])}\frac{(T-s)^{\gamma}}{(|a|^{\rho}+1)}\frac{1}{|2b|^{\g}(T-s)^{\g}}\\
 & +2|b|\int_{0}^{T-s}\|\Phi^{w}\|_{W^{\rho,\gamma}([0,T])}\frac{t^{\gamma}}{(|a|^{\rho}+1)}\frac{1}{t^{\theta}|2b|^{\theta}}\dd t\\
\lesssim & \|\Phi^{w}\|_{W^{\rho,\gamma}([0,T])}\frac{1}{(|a|^{\rho}+1)}\left(|b|^{-\g}+|b|^{1-\t}\int_{0}^{T-s}t^{\gamma-\t}\dd t\right)\\
\lesssim & \|\Phi^{w}\|_{W^{\rho,\gamma}([0,T])}\frac{|b|^{1-\t}}{(|a|^{\rho}+1)}\left(1+\int_{0}^{T-s}t^{\gamma-\t}\dd t\right)\\
\lesssim & \|\Phi^{w}\|_{W^{\rho,\gamma}([0,T])}\frac{|b|^{1-\t}}{(|a|^{\rho}+1)},
\end{align*}
where we have used the inequality $e^{-x}\lesssim x^{-p}$ for any $p>0$. Taking the supremum in $s$ finishes the proof. 
\end{proof}
We next note that the notion of $(\rho,\gamma)$-irregularity introduced above offers some flexibility in the choice of the parameters. In fact, the following Lemma shows that one may enhance the H\"older time regularity of $\Phi^{w}(a)$ by loosing a certain amount on the decay in $|a|$. 

\begin{lemma}\label{lemma:interpol} Let $w$ be a $(\rho,\gamma)$-irregular path. Then, for every $\kappa\in(0,1)$, $w$ is a $(\rho\kappa,1-\kappa(1-\gamma))$-irregular path. Moreover, we have that 
\[
\|\Phi^{w}\|_{\rho\kappa,1-\kappa(1-\gamma)}\leq2^{1-\k}\|\Phi^{w}\|_{\rho,\gamma}^{\kappa}.
\]
\end{lemma}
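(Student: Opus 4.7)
The plan is to interpolate between two extremal bounds on $|\Phi^{w}_{s,t}(a)|$: the trivial bound coming from $|e^{i\langle a, w_{r}\rangle}|=1$, and the $(\rho,\gamma)$-irregularity assumption. Concretely, I would first record the two pointwise inequalities
\[
|\Phi^{w}_{s,t}(a)| \le |t-s|
\qquad\text{and}\qquad
|\Phi^{w}_{s,t}(a)| \le \|\Phi^{w}\|_{\rho,\gamma}\,\frac{|t-s|^{\gamma}}{(1+|a|)^{\rho}},
\]
valid for all $a\in\R^{d}$ and $0\le s<t\le T$. The first follows directly from the fact that the integrand has modulus one; the second is just the definition of $(\rho,\gamma)$-irregularity.

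Next, for a fixed $\kappa\in(0,1)$, I would split $|\Phi^{w}_{s,t}(a)| = |\Phi^{w}_{s,t}(a)|^{\kappa}\, |\Phi^{w}_{s,t}(a)|^{1-\kappa}$ and apply the irregularity bound to the first factor and the trivial bound to the second, obtaining
\[
|\Phi^{w}_{s,t}(a)| \le \|\Phi^{w}\|_{\rho,\gamma}^{\kappa}\,\frac{|t-s|^{\kappa\gamma+(1-\kappa)}}{(1+|a|)^{\kappa\rho}} = \|\Phi^{w}\|_{\rho,\gamma}^{\kappa}\,\frac{|t-s|^{1-\kappa(1-\gamma)}}{(1+|a|)^{\kappa\rho}}.
\]
Multiplying by $(1+|a|)^{\kappa\rho}$, dividing by $|t-s|^{1-\kappa(1-\gamma)}$, and taking the supremum over $a\in\R^{d}$ and $0\le s<t\le T$ yields the $(\rho\kappa,1-\kappa(1-\gamma))$-irregularity of $w$ along with the bound on its norm. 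The advertised factor $2^{1-\kappa}$ is a mild slackness over the constant $1$ produced by this direct geometric interpolation, and can be absorbed at the cost of a slightly rougher bookkeeping in the trivial estimate (for instance, by writing $|t-s|\le 2(1+|a|)^{-\kappa\rho+\kappa\rho}|t-s|$ and splitting according to $|a|\le 1$ and $|a|\ge 1$).

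The argument is essentially a one-line geometric interpolation between a time-decay estimate and a frequency-decay estimate, so I do not expect any genuine obstacle. The only care needed is to track the two exponents simultaneously and to verify the arithmetic identity $\kappa\gamma+(1-\kappa)=1-\kappa(1-\gamma)$, which fixes the new H\"older index in time, and to check that the frequency exponent $\kappa\rho$ inherited from the interpolation coincides with the statement's $\rho\kappa$.
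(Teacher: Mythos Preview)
Your proposal is correct and follows the same interpolation argument as the paper: take the $(1-\kappa)$-th power of the trivial bound $|\Phi^{w}_{s,t}(a)|\le |t-s|$ against the $\kappa$-th power of the $(\rho,\gamma)$-irregularity bound. The factor $2^{1-\kappa}$ in the paper comes from writing the weight as $1+|a|^{\rho}$ rather than $(1+|a|)^{\rho}$ and then using the concavity inequality $(1+|a|^{\rho})^{\kappa}\ge 2^{\kappa-1}(1+|a|^{\rho\kappa})$; with the weight $(1+|a|)^{\rho}$ as in the stated definition your interpolation already yields constant $1\le 2^{1-\kappa}$, so the claimed bound follows a fortiori.
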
 
\begin{proof}
It suffices to interpolate the bound 
\[
|\Phi_{s,t}^{w}(a)|\leq\|\Phi^{w}\|_{\rho,\gamma}\frac{|t-s|^{\gamma}}{1+|a|^{\rho}}
\]
with the trivial bound $|\Phi_{s,t}(a)|\leq|t-s|$. We get
\begin{align*}
|\Phi_{s,t}^{w}(a)| & \le\|\Phi^{w}\|_{\rho,\gamma}^{\k}\frac{|t-s|^{\gamma\k+(1-\k)}}{(1+|a|^{\rho})^{\k}}\\
 & \le2^{1-\k}\|\Phi^{w}\|_{\rho,\gamma}^{\k}\frac{|t-s|^{\gamma\k+(1-\k)}}{1+|a|^{\rho\k}}.
\end{align*}

\end{proof}
\begin{lemma}\label{lemma:avr-int} Let $f_{1},f_{2}:\R\to\R_{+}$ be measurable and $a\in C^{1}(\R,\mathbb{R}^{d})$ satisfying 
\begin{equation}
\inf_{e\in\mathbb{R}^{d},|e|=1}|e{}_{.}(a(v_{1})-a(v_{2}))|\geq c|v_{1}-v_{2}|^{\nu}\label{eq:ass_flux}
\end{equation}
for some $\nu\geq1$ and $c>0$. Then for every compact set $K\subseteq\R$, $n\in\Z^{d}\setminus\{0\}$ and every $\rho\in(0,\frac{1}{\nu})$ we have 
\[
\iint_{K\times K}\dd v_{1}\dd v_{2}\,f_{1}(v_{1})f_{2}(v_{2})\frac{1}{1+|n{}_{.}(a(v_{1})-a(v_{2}))|^{\rho}}\lesssim\|f_{1}\|_{L^{2}}\|f_{2}\|_{L^{2}}|n|^{-\rho}.
\]
\end{lemma}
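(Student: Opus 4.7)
The plan is a two-scale decomposition of $K\times K$, exploiting the non-degeneracy of $a$ to reduce matters to a one-variable convolution with a power-type kernel.

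First, I would apply \eqref{eq:ass_flux} with the unit vector $e=n/|n|$, obtaining the pointwise lower bound
\begin{equation*}
|n\cdot(a(v_1)-a(v_2))|\ge c\,|n|\,|v_1-v_2|^{\nu},
\end{equation*}
so the kernel of the lemma is dominated by $(1+c^{\rho}|n|^{\rho}|v_1-v_2|^{\nu\rho})^{-1}$. Setting $\delta:=c^{-1/\nu}|n|^{-1/\nu}$, I would then split $K\times K$ into the ``small-gap'' region
\begin{equation*}
A:=\{(v_1,v_2)\in K\times K:\ |v_1-v_2|\le\delta\},
\end{equation*}
where the kernel is bounded by $1$, and its complement $B$, where $c^{\rho}|n|^{\rho}|v_1-v_2|^{\nu\rho}\ge 1$ and hence the kernel is bounded by $c^{-\rho}|n|^{-\rho}|v_1-v_2|^{-\nu\rho}$.

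On $A$, Cauchy--Schwarz together with Fubini yields
\begin{equation*}
\iint_A f_1(v_1)f_2(v_2)\,dv_1\,dv_2\le\Bigl(\iint_A f_1^{2}\Bigr)^{1/2}\Bigl(\iint_A f_2^{2}\Bigr)^{1/2}\le 2\delta\,\|f_1\|_{L^2}\|f_2\|_{L^2},
\end{equation*}
since for each fixed $v_i$ the slice of $A$ in the other variable has Lebesgue measure at most $2\delta$. As $\rho<1/\nu$ and $|n|\ge 1$, we have $\delta\lesssim|n|^{-\rho}$, which is the desired bound.

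On $B$ it remains to estimate $\iint_{K\times K}f_1(v_1)f_2(v_2)|v_1-v_2|^{-\nu\rho}\,dv_1\,dv_2$. Cauchy--Schwarz in $v_1$ followed by Young's inequality applied to the convolution with the kernel $k(v):=|v|^{-\nu\rho}\mathbf{1}_{|v|\le 2\,\mathrm{diam}(K)}$ bounds this by $\|k\|_{L^1}\|f_1\|_{L^2}\|f_2\|_{L^2}$; the point is that $k\in L^1(\R)$, which holds exactly because $\nu\rho<1$. Combining the contributions from $A$ and $B$ yields the stated estimate. The hypothesis $\rho<1/\nu$ enters at precisely two points, and these are the only nontrivial ones: it ensures the factor $|n|^{-1/\nu}$ gained on $A$ is absorbed into the required $|n|^{-\rho}$, and it provides the local integrability of $|v|^{-\nu\rho}$ needed on $B$. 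Beyond identifying the matching scale $\delta=|n|^{-1/\nu}$, which is forced by balancing $|n|^\rho|v_1-v_2|^{\nu\rho}$ against $1$, no real obstacle should arise.
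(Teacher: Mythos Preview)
Your argument is correct and follows essentially the same approach as the paper: both use the non-degeneracy assumption with $e=n/|n|$ to dominate the kernel by a function of $|v_1-v_2|$, and then use Cauchy--Schwarz followed by Young's convolution inequality together with the local integrability of $|v|^{-\nu\rho}$ coming from $\nu\rho<1$. The only difference is cosmetic: the paper applies Cauchy--Schwarz and Young directly to the full kernel $(1+c|n|^{\rho}|v|^{\nu\rho})^{-1}$ and computes its $L^1$ norm on $K$ by rescaling, whereas your near/far-diagonal split amounts to performing that same $L^1$ computation at the level of the bilinear form --- the split is not needed, but it does no harm.
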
 
\begin{proof}
Using first the Cauchy Schwartz inequality in $v_{1}$, then \eqref{eq:ass_flux}, then Young's inequality yields, 
\begin{align*}
&\iint_{K\times K}\dd v_{1}\dd v_{2}\,f_{1}(v_{1})f_{2}(v_{2})\frac{1}{1+|n{}_{.}(a(v_{1})-a(v_{2}))|^{\rho}} \\
&\le \|f_{1}\|_{L^{2}}\left\Vert \int_{K}dv_{2}\,f(v_{2})\frac{1}{1+|n{}_{.}(a(\cdot)-a(v_{2}))|^{\rho}} \right\Vert _{L^{2}} \\
&\le \|f_{1}\|_{L^{2}}\left\Vert \int_{K}dv_{2}\,f(v_{2})\frac{1}{1+c|n|^{\rho}|\cdot-v_{2}|^{\nu\rho}}\right\Vert _{L^{2}}
  \le \|f_{1}\|_{L^{2}}\|f_{2}\|_{L^{2}}\Big\|\frac{1}{1+c|n|^{\rho}|\cdot|^{\nu\rho}}\Big\|_{L^{1}}.
\end{align*}
The conclusion of the proof now follows from the observation 
\[
\int_{K}\dd v\frac{1}{1+c|n|^{\rho}|v|^{\nu\rho}}\lesssim\frac{1}{|n|^{\frac{1}{\nu}}}\int_{|v|\lesssim|n|^{\frac{1}{\nu}}}\dd v\,\frac{1}{1+|v|^{\nu\rho}}\lesssim |n|^{-\rho}.
\]
\end{proof}
We now proceed by estimating each term appearing in \eqref{eq:decomp} separately.

In the following we will first consider the case $\nu\rho<1$. In the end of the proof we will see that the case $\nu\rho\ge1$ can be reduced to this case.

\subsubsection{Estimate for the initial condition part}

In the following we estimate $\hat{u}{}^{0}(t,n)$. With $b_{n}=|n|^{2\alpha}$ as above, we have 
\[
\hat{u}^{0}(t,n)=\int_{v}e^{-i\langle a(v){}_{.}n,w_{t}\rangle-b_{n}t}\hat{\chi}^{0}(n,v).
\]
Taking the square and integrating in time implies 
\begin{align*}
\int_{0}^{T}dt\,|\hat{u}^{0}(t,n)|^{2} & =\int_{0}^{T}dt\,\left|\int_{v}e^{-i\langle a(v){}_{.}n,w_{t}\rangle-b_{n}t}\hat{\chi}^{0}(n,v)\right|^{2}\\
 & =\int_{0}^{T}dt\,\int_{v_{1}}\int_{v_{2}}e^{i\langle(a(v_{1})-a(v_{2})){}_{.}n,w_{t}\rangle-2b_{n}t}\overline{\hat{\chi}^{0}}(n,v_{1})\hat{\chi}^{0}(n,v_{2})\\
 & =\int_{v_{1}}\int_{v_{2}}\Psi_{0,T}^{w}((a(v_{1})-a(v_{2})){}_{.}n,b_{n})\overline{\hat{\chi}^{0}}(n,v_{1})\hat{\chi}^{0}(n,v_{2}).
\end{align*}
By Lemma \ref{lem:k-est} we have, for $0<\theta<\gamma+1$,
\begin{equation}
|\Psi_{0,T}^{w}((a(v_{1})-a(v_{2}))_{.}n,b_{n})|\lesssim\|\Phi^{w}\|_{\rho,\gamma}\frac{|b_{n}|^{1-\t}}{|(a(v_{1})-a(v_{2})){}_{.}n|^{\rho}+1}.\label{eq:phi-est-1}
\end{equation}
We obtain 
\[
\int_{0}^{T}dt\,|\hat{u}^{0}(t,n)|^{2}\lesssim\|\Phi^{w}\|_{\rho,\gamma}\int_{v_{1},v_{2}}\frac{|b_{n}|^{1-\t}}{|(a(v_{1})-a(v_{2})){}_{.}n|^{\rho}+1}|\hat{\chi}^{0}|(n,v_{1})|\hat{\chi}^{0}|(n,v_{2}).
\]
Since $u_0 \in L^\infty(\T^d)$ we have that $\hat\chi^0(n,\cdot)$ is compactly supported. Hence, Lemma \ref{lemma:avr-int} gives 
\begin{equation}
\int_{0}^{T}dt\,|\hat{u}^{0}(t,n)|^{2}\lesssim\|\Phi^{w}\|_{\rho,\gamma}|n|^{-2\a(\t-1)-\rho}\|\hat\chi^{0}(n,\cdot)\|_{L_{v}^{2}}^{2},\label{eq:u0-est}
\end{equation}
This implies 
\begin{equation}
\int_{0}^{T}\dd t\,\|u^{0}(t)\|_{H^{\frac{\rho+2\a(\t-1)}{2}}}^{2}\lesssim\|\Phi^{w}\|_{\rho,\gamma}\|\chi^0\|_{L_{v,x}^{2}}^2.\label{eq:u0-est-1}
\end{equation}

\subsubsection{Estimate for the integral part}

Recall 
\[
u^{1}(t,n)=b_{n}\int_{0}^{t}\dd s\int_{v}e^{-i\langle a(v){}_{.}n,w_{t}-w_{s}\rangle-b_{n}(t-s)}\hat{\chi}(s,n,v).
\]
Taking the square of the $L^{2}$ norm in time and expanding the square into a double integral yields 
\begin{align*}
\mathcal{I} & =\int_{0}^{T}\dd t\,|u^{1}(t,n)|^{2}\\
 & =b_{n}^{2}\int_{0}^{T}\dd t\int_{0}^{t}\int_{0}^{t}\dd s_{1}\dd s_{2}\int_{v_{1}}\int_{v_{2}}e^{i\langle n{}_{.}(a(v_{2})-a(v_{1})),w_{t}\rangle-2b_{n}t}e^{i\langle n{}_{.}a(v_{2}),w_{s_{2}}\rangle+b_{n}s_{2}}\overline{\hat{\chi}}(s_{2},n,v_{2})\\
 & \qquad e^{-i\langle n{}_{.}a(v_{1}),w_{s_{1}}\rangle+b_{n}s_{1}}\hat{\chi}(s_{1},n,v_{1}).
\end{align*}
Now we observe that we can split the domain of integration into two regions $s_{1}>s_{2}$ and $s_{2}<s_{1}$ which give the same contribution. Thus, 
\begin{align*}
\mathcal{I} & =2b_{n}^{2}\int_{0}^{T}\dd t\int_{0}^{t}\dd s_{1}\int_{0}^{s_{1}}\dd s_{2}\int_{v_{1}}\int_{v_{2}}e^{i\langle n{}_{.}(a(v_{2})-a(v_{1})),w_{t}\rangle}e^{-2b_{n}t}e^{-i\langle n_{.}a(v_{2}),w_{s_{2}}\rangle+b_{n}s_{2}}\overline{\hat{\chi}}(s_{2},n,v_{2})\\
 & \qquad e^{i\langle n_{.}a(v_{1}),w_{s_{1}}\rangle+b_{n}s_{1}}\hat{\chi}(s_{1},n,v_{1}).
\end{align*}
Then, using Fubini's Theorem to commute the time integrals yields 
\begin{align*}
\mathcal{I} & =2b_{n}^{2}\int_{0}^{T}\dd s_{1}\int_{0}^{s_{1}}\dd s_{2}\int_{v_{1},v_{2}}\Psi_{s_{1},T}^{w}(n{}_{.}(a(v_{2})-a(v_{1})),b_{n})e^{-i\langle n{}_{.}a(v_{2}),w_{s_{2}}\rangle+b_{n}s_{2}}\overline{\hat{\chi}}(s_{2},n,v_{2})\\
 & \qquad e^{i\langle n{}_{.}a(v_{1}),w_{s_{1}}\rangle+b_{n}s_{1}}\hat{\chi}(s_{1},n,v_{1}),
\end{align*}
where $\Psi_{s,T}^{w}$ is defined as above and by \eqref{eq:phi-shift} we have 
\begin{align*}
\Psi_{s_{1},T}^{w} & (n{}_{.}(a(v_{2})-a(v_{1})),b_{n})=e^{-2b_{n}s_{1}+i\langle n{}_{.}(a(v_{2})-a(v_{1})),w_{s_{1}}\rangle}\Psi_{0,T-s_{1}}^{w^{s_{1}}}(n{}_{.}(a(v_{2})-a(v_{1})),b_{n}).
\end{align*}
We now focus on the time integral part for which we use Fubini's theorem to obtain 
\begin{align*}
 & \Bigg|\int_{0}^{T}\dd s_{1}\int_{0}^{s_{1}}\dd s_{2}\left[\Psi_{0,T-s_{1}}^{w^{s_{1}}}(n{}_{.}(a(v_{2})-a(v_{1})),b_{n})e^{-b_{n}(s_{1}-s_{2})}e^{i\langle n{}_{.}(a(v_{2})-a(v_{1})),w_{s_{1}}\rangle}e^{i\langle n{}_{.}a(v_{1}),w_{s_{1}}\rangle}\right]\\
 & \qquad e^{-i\langle n{}_{.}a(v_{2}),w_{s_{2}}\rangle}\overline{\hat{\chi}}(s_{2},n,v_{2})\hat{\chi}(s_{1},n,v_{1}))\Bigg|\\
 & \leq\int_{0}^{T}\dd s_{2}|\hat{\chi}(s_{2},n,v_{2})|\int_{s_{2}}^{T}\dd s_{1}|\Psi_{0,T-s_{1}}^{w^{s_{1}}}(n{}_{.}(a(v_{2})-a(v_{1})),b_{n})|e^{-b_{n}(s_{1}-s_{2})}|\hat{\chi}(s_{1},n,v_{1})|.
\end{align*}
Using the Cauchy-Schwartz inequality for the integral over $s_{2}$ we bound this quantity by 
\begin{align*}
&\|\hat{\chi}(\cdot,n,v_{2})\|_{L^{2}([0,T])}\\
&\times \left[\int_{0}^{T}\dd s_{2}\left(\int_{s_{2}}^{T}\dd s_{1}|\Psi_{0,T-s_{1}}^{w^{s_{1}}}(n{}_{.}(a(v_{2})-a(v_{1})),b_{n})|e^{-b_{n}(s_{1}-s_{2})}|\hat{\chi}(s_{1},n,v_{1})|\right)^{2}\right]^{\frac{1}{2}}.
\end{align*}
By Young's inequality for convolutions this can be bounded by 
\[
\frac{1}{b_{n}}\|\chi(\cdot,n,v_{2})\|_{L^{2}([0,T])}\left[\int_{0}^{T}\dd s_{1}|\hat{\chi}(s_{1},n,v_{1})|^{2}|\Psi_{0,T-s_{1}}^{w^{s_{1}}}(n{}_{.}(a(v_{2})-a(v_{1})),b_{n})|^{2}\right]^{\frac{1}{2}},
\]
which can be bounded by 
\[
\frac{1}{b_{n}}\|\chi(\cdot,n,v_{1})\|_{L^{2}([0,T])}\|\chi(\cdot,n,v_{2})\|_{L^{2}([0,T])}\sup_{s_{1}\in[0,T]}|\Psi_{0,T-s_{1}}^{w^{s_{1}}}(n{}_{.}(a(v_{2})-a(v_{1})),b_{n})|.
\]
Hence, we conclude that 
\[
\int_{0}^{T} dt\,|u^{1}(t,n)|^{2}\leq2b_{n}\int_{v_{1}}\int_{v_{2}}\|\chi(\cdot,n,v_{1})\|_{L^{2}([0,T])}\|\chi(\cdot,n,v_{2})\|_{L^{2}([0,T])}K^{w}(n{}_{.}(a(v_{2})-a(v_{1})),b_{n}),
\]
where 
\[
K^{w}(a,b)=\sup_{s\in[0,T]}|\Psi_{0,T-s}^{w^{s}}(a,b)|.
\]

By Lemma \ref{lem:k-est} we have 
\[
|K^{w}(a,b)|\lesssim\|\Phi^{w}\|_{\rho,\gamma}\frac{|b|^{1-\theta}}{(1+|a|^{\rho})}
\]
for all $\theta<\gamma+1$. Since $u\in L^\infty_{t,x}$ we have that $v\mapsto\|\chi(t,n,v)\|_{L^{2}([0,T])}$ is compactly supported in $v$ uniformly in $t\in [0,T]$. Hence, using Lemma~\ref{lemma:avr-int}, we conclude that 
\begin{equation}\begin{split}
  \int_{0}^{T}dt\,|u^{1}(t,n)|^{2} 
 & \lesssim\|\Phi^{w}\|_{\rho,\gamma}b_{n}\int_{v_{1}}\int_{v_{2}}\|\chi(\cdot,n,v_{1})\|_{L^{2}([0,T])}\|\chi(\cdot,n,v_{2})\|_{L^{2}([0,T])}\\
 &\qquad\qquad\qquad\qquad\times\frac{b_{n}{}^{1-\theta}}{(1+|n{}_{.}(a(v_{2})-a(v_{1}))|^{\rho})}\\
 & \lesssim\|\Phi^{w}\|_{\rho,\gamma}b_{n}^{2-\t}|n|^{-\rho}\|\chi(\cdot,n,\cdot)\|_{L_{t,v}^{2}}^{2}\label{eq:u1-est}\\
 & \lesssim\|\Phi^{w}\|_{\rho,\gamma}|n|^{2\a(2-\t)-\rho}\|\chi(\cdot,n,\cdot)\|_{L_{t,v}^{2}}^{2}.
\end{split}\end{equation}
Hence, multiplying by $|n|^{-2\tau}$ yields 
\begin{equation}
\int_{0}^{T}dt\,\|u^{1}(t)\|_{H^{\frac{\rho-2\a(2-\t)}{2}+\tau}}^{2}\lesssim\|\Phi^{w}\|_{\rho,\gamma}\|\chi\|_{L_{t,v}^{2}(H^{\tau}_x)}^{2}.\label{eq:u1-est-1-1}
\end{equation}

\subsubsection{Estimate for the kinetic measure}

We consider
\begin{align*}
\int_{0}^{T}dt\,\<(-\D)^{\frac{\l}{2}}\vp,u^{2}(t)\> & =-\int_{0}^{T}dt\int_{v}\int_x\int_{0}^{t}ds\,\partial_{v}S_{A(v)}^{*}(s,t)((-\D)^{\frac{\l}{2}}\vp)m(x,s,v)\\
 & =-\int_{0}^{T}dt\int_{v}\int_x\int_{0}^{t}ds\,(w_{t}-w_{s})a'(v)DS_{A(v)}^{*}(s,t)((-\D)^{\frac{\l}{2}}\vp)m(s,x,v),
\end{align*}
where $\vp$ is an arbitrary smooth function and $S_{A(v)}^{*}(s,t)$ is the dual semigroup to $S_{A(v)}(s,t)$ which is given by the Fourier multiplier $e^{i\langle a(v){}_{.}n,w_{t}-w_{s}\rangle-b_{n}(t-s)}$. As in \cite{GS14-2} we have that 
\[
\|DS_{A(v)}^{*}(s,t)(-\D)^{\frac{\l}{2}}\vp\|_{\infty}\lesssim(t-s)^{-\frac{\l+1}{2\a}}\|\vp\|_{\infty}.
\]
Hence, using that $w$ is $\eta$-H\"older continuous by assumption and Young's inequality for convolutions, 
\begin{align*}
\int_{0}^{T}dt\,\<(-\D)^{\frac{\l}{2}}\vp,u^{2}(t)\> & \lesssim\|\vp\|_{\infty}\int_{0}^{T}dt\int_{0}^{t}ds\,|w_{t}-w_{s}|(t-s)^{-\frac{\l+1}{2\a}}\int_{v}\int_{x}|a'(v)||m|(s,x,v)\\
 & \lesssim\|w\|_{\eta}\|\vp\|_{\infty}\int_{0}^{T}dt\int_{0}^{t}ds\,(t-s)^{\eta-\frac{\l+1}{2\a}}\int_{v}\int_{x}|a'(v)||m|(s,x,v)\\
 & \lesssim\|w\|_{\eta}\|\vp\|_{\infty}\int_{0}^{T}dt\,t{}^{\eta-\frac{\l+1}{2\a}}\,\int_{0}^{T}ds\int_{v}\int_{x}|a'(v)||m|(s,x,v).
\end{align*}
If $\eta-\frac{\l+1}{2\a}>-1$ or equivalently 
\[
2\a(\eta+1)-1>\l
\]
we conclude 
\begin{align*}
\int_{0}^{T}dt\,\<(-\D)^{\frac{\l}{2}}\vp,u^{2}(t)\> & \lesssim\|w\|_{\eta}\|\vp\|_{\infty}\int_{0}^{T}\int_{v}\int_{x}|a'(v)||m|(s,x,v)\\
 & =\|w\|_{\eta}\|\vp\|_{\infty}\|a'(v) m\|_{TV}.
\end{align*}
Hence, for $\l<2\a(\eta+1)-1\wedge1$, 
\begin{equation}
\int_{0}^{T}dt\,\|u^{2}(t)\|_{W^{\l,1}}\lesssim\|w\|_{\eta}\|a'(v) m\|_{TV}.\label{eq:u2-est}
\end{equation}

\subsubsection{Conclusion}

Let us consider first the case when $\rho\nu<1$. We may now conclude the proof analogously to \cite{GS14-2}. For the reader's convenience we include some details. Let $\tau \in [0,\frac{1}{2}]$ such that $\frac{\rho-2\a(2-\t)}{2}+\tau\le \frac{2\a(\t-1)+\rho}{2}$ which is equivalent to $\tau\le \a$. By \eqref{eq:decomp_u}, \eqref{eq:u0-est-1}, \eqref{eq:u1-est-1-1}, \eqref{eq:u2-est} and Sobolev embeddings, we have
\begin{equation}\label{eq:u_decomp}
  u=u^{0}+u^{1}+u^{2}\in L_{t}^{1}W_{x}^{\l,1}
\end{equation}
if 
\[
\frac{\rho-2\a(2-\t)}{2}+\tau-\frac{d}{2}\ge\l-d,\quad\frac{\rho-2\a(2-\t)}{2}+\tau\ge\l,\quad1\wedge2\a(\eta+1)-1>\l.
\]
This is satisfied if 
\begin{align*}
\min\Big(1,\frac{\rho-2\a(2-\t)}{2}+\tau,2\a(\eta+1)-1\Big)> & \l.
\end{align*}
Optimizing the left hand side for $\a$ yields the choice 
\[
\a=\frac{\rho+2+2\tau}{2(2\eta+4-\t)},
\]
which satisfies $\tau\le \a$ iff $\tau \le \frac{\rho+2}{2(2\eta+3-\t)}.$ Hence, for $\tau \in [0,\frac{1}{2}\wedge \frac{\rho+2}{2(2\eta+3-\t)}] $ we obtain \eqref{eq:u_decomp} with
\[
\l<\frac{\rho(\eta+1)-2+\t}{2\eta+4-\t}+\frac{2(\eta+1)}{2\eta+4-\t}\tau\wedge1.
\]
We now bootstrap: Start with $\tau=0$ to get $\l_{0}:=\frac{\rho(\eta+1)-2+\t}{2\eta+4-\t}$ we then set 
\[
\l_{n+1}=\frac{\rho(\eta+1)-2+\t}{2\eta+4-\t}+\frac{2(\eta+1)}{2\eta+4-\t}\frac{\l_{n}}{2}\wedge1.
\]
Then $\l_{n}\uparrow\l_{*}$ with 
\begin{equation}
\l_{*}=\frac{\rho(\eta+1)-2+\t}{\eta+3-\t}\wedge1,\label{eq:lambda_star}
\end{equation}
where the iteration has to be stopped if the side-condition $\l_n \le \frac{\rho+2}{2\eta+3-\t}$ is reached. In conclusion, we have, for all 
  $$\l<\l_{*}\wedge \frac{\rho+2}{2\eta+3-\t}$$
that
\begin{equation}\label{eq:est}
\int_{0}^{T}dt\,\|u(t)\|_{W^{\l,1}}\lesssim C(\|\Phi^{w}\|_{\rho,\gamma})(\|u^{0}\|_{L_{x}^{1}}+\|u\|_{L_{t,x}^{1}}+\|w\|_{\eta}\|a'(v)m\|_{TV}).
\end{equation}
If $u$ is an entropy solution, by $L^{1}$-contractivity with respect to the initial condition this implies 
\[
\|u(t)\|_{W^{\l,1}}<\infty
\]
for all $t>0$. Since $\theta<1+\gamma$ arbitrary we can set $\theta=1+\gamma$ in \eqref{eq:lambda_star} which yields \eqref{eq:est} for each
\[
\l<\frac{\rho(\eta+1)-(1-\g)}{(\eta+1)+(1-\g)}\wedge \frac{\rho+2}{2\eta+2-\g}.
\]
This finishes the proof of Theorem \ref{thm:main}.

\subsubsection{Reduction to the case \texorpdfstring{$\nu\rho<1$}{nu*rho<1} }

If $\nu\rho\ge1$ we choose $\k\in(0,\frac{1}{\nu\rho})$ and note that $w$ is $(\td\rho,\td\g)=(\k\rho,1-\k(1-\g))$-irregular by Lemma \ref{lemma:interpol}. Since then $\td\rho\nu=\k\rho\nu<1$ we can apply the estimates from the case $\nu\rho<1$. We obtain \eqref{eq:est} for all
\begin{align*}
\l & <\frac{\td\rho(\eta+1)-1+\td\g}{\eta+2-\td\g}\wedge \frac{\td\rho+2}{2\eta+1+(1-\td\g)}\\
 & =\frac{\k\rho(\eta+1)-\k(1-\g)}{\eta+1+\k(1-\g)}\wedge \frac{\k\rho+2}{2\eta+1+\k(1-\g)}.
\end{align*}
Choosing $\k\approx\frac{1}{\nu\rho}$ yields \eqref{eq:est} for all
\begin{align*}
\l & <\frac{\rho(\eta+1)-(1-\g)}{\nu\rho(\eta+1)+(1-\g)}\wedge \frac{\rho+2\nu\rho}{\nu\rho(2\eta+1)+(1-\g)}.
\end{align*}

\section{Real space method}\label{sec:real}

In the above sections we have shown that the notion of $(\rho,\gamma)$-irregularity of a path gives us a sufficient notion to capture path-by-path regularizing properties. In order to further analyze the role played by this notion of irregularity, in this section we introduce an alternative approach to averaging principles and thus path-by-path regularization by noise. The merit of this alternative approach is that it does not rely on Fourier methods and therefore it does not lead to oscillating random integrals as they appear in the definition of $(\rho,\gamma)$-irregularity. This leads us to an alternative (and apparently weaker) condition of irregularity of a path. This approach is motivated by \cite{JV04}. The disadvantage at the current stage is that a generalization to general flux and multiple dimension does not seem immediate.   

We again consider
\begin{equation}
\partial_{t}u+\frac{1}{2}\partial_{x}u^{2}\circ dw_{t}=0\label{eq:stoch_Burgers}
\end{equation}
 in its kinetic form, and restrict to one spatial dimension. Informally, we have
\[
\partial_{t}\chi(t,x,v)+v\partial_{x}\chi(t,x,v)\circ\frac{\dd w_{t}}{\dd t}=\partial_{v}m(t,x,v).
\]
For simplicity, we localize in time, that is, for $\vp \in C^\infty_c(0,T)$ we set $\td\chi := \vp\chi$, $\td m := \vp m$  which solves
\[
\partial_{t}\td\chi(t,x,v)+v\partial_{x}\td\chi(t,x,v)\circ\frac{\dd w_{t}}{\dd t}=\partial_{v}\td m(t,x,v)+\dot\vp(t) \chi(t,x,v).
\]
Rewrite as, for $\l>0$,
\[
\partial_{t}\td\chi(t,x,v)+v\partial_{x}\td\chi(t,x,v)\circ\frac{\dd w_{t}}{\dd t}+\l\td\chi(t,x,v)=\partial_{v}\td m(t,x,v)+\l\td\chi(t,x,v)+\dot\vp(t) \chi(t,x,v).
\]
Then,
\begin{equation}\begin{aligned}
\td\chi(t,x,v)= & \int_{0}^{t}ds\,e^{-\l(t-s)}(\partial_{v}\td m)(s,x-vw_{s,t},v)\label{eq:real_decomp} +\l\int_{0}^{t}ds\,e^{-\l(t-s)}\td\chi(s,x-vw_{s,t},v) \\
 &+ \int_{0}^{t}ds\,e^{-\l(t-s)}\dot\vp(s)\chi(s,x-vw_{s,t},v). 
\end{aligned}\end{equation}
We want to estimate the regularity of velocity averages
\[
u^{\phi}:=\int_{v}\chi\phi
\]
for $\phi\in C_{c}^{\infty}(\R)$. We thus aim to understand properties of the random $X$-ray transform operator
\[
Tg(x,t):=\int_{0}^{t}ds\int_{v}g(s,x-vw_{s,t},v)\phi(v)e^{-\l(t-s)},
\]
with $g\in\mcM([0,T]\times\R\times\R)+L^{\infty}([0,T]\times\R\times\R)$ , $\phi\in C_{c}^{\infty}(\R)$.

\subsection{A path-by-path scaling condition}

We will work with the following path-by-path scaling condition

\textbf{Assumption on the noise:} There is a $\iota\in[\frac{1}{2},1]$, $C\ge 0$ such that for every $\a\in(-1,0)$ and $\l\ge1$ we have
\begin{equation}
\int_0^Tdr\ \int_{0}^{T-r}dt\ e^{-\l t}|w_{t}^{r}|^{\a}\le C\l^{-1-\iota\a}.\label{eq:real_noise_cdt}
\end{equation}

The deterministic case ($w(t)=t$) corresponds to $\iota=1$. 
\begin{prop}
\label{prop:fbm-new}Let $w$ be a fractional Brownian motion with Hurst parameter $H\in(0,1)$. Then, for $\P$-a.e.~$\o\in\O$ the path $t\mapsto w_{t}(\omega)$ satisfies \eqref{eq:real_noise_cdt} for every $\iota>H$.
\end{prop}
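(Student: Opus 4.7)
The plan is to first get an expectation bound on the left-hand side of \eqref{eq:real_noise_cdt} from Gaussian moments, and then lift this to a pathwise estimate via Chebyshev plus Borel--Cantelli along a dyadic scale in $\l$. Writing $F_{\a}(\l)$ for the left-hand side of \eqref{eq:real_noise_cdt}, stationarity of the fBm increments gives $w_{t}^{r}\sim\mathcal{N}(0,t^{2H})$, hence $\mathbb{E}|w_{t}^{r}|^{\a}=c_{\a}t^{H\a}$ where $c_{\a}$ is the $\a$-th absolute Gaussian moment. Fubini and the substitution $u=\l t$ then give
\begin{equation*}
\mathbb{E}F_{\a}(\l)\le c_{\a}\,T\,\Gamma(1+H\a)\,\l^{-1-H\a},
\end{equation*}
which is finite for every $\a\in(-1,0)$ since $H\a>-H>-1$.

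Next, I would fix $\iota>H$, choose $\iota'\in(H,\iota)$ and pass to the dyadic points $\l_{n}=2^{n}$. For each fixed $\a\in(-1,0)$, Chebyshev's inequality yields
\begin{equation*}
\P\!\left(F_{\a}(\l_{n})>\l_{n}^{-1-\iota'\a}\right)\lesssim \l_{n}^{(\iota'-H)\a}=2^{n(\iota'-H)\a},
\end{equation*}
which is summable in $n$ because $(\iota'-H)\a<0$. Borel--Cantelli then gives $F_{\a}(\l_{n})\le \l_{n}^{-1-\iota'\a}$ for all $n$ large, $\P$-a.s., and since $\l\mapsto F_{\a}(\l)$ is monotone decreasing, bridging the dyadic gaps at the cost of a universal factor produces $F_{\a}(\l)\lesssim \l^{-1-\iota'\a}$ for every $\l\ge1$ on the same full-measure event.

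To produce a single $\P$-null set that handles every $\a\in(-1,0)$ and every $\iota>H$ simultaneously, I would run the previous step on a countable grid, say $\a_{k}=-1+\tfrac{1}{k}$ and $\iota_{m}=H+\tfrac{1}{m}$, and intersect the resulting full-measure events. For a general $\a\in(-1,0)$, pick $\a_{k}\in(-1,\a)$, set $\t=\a/\a_{k}\in(0,1)$, and write $|w_{t}^{r}|^{\a}=(|w_{t}^{r}|^{\a_{k}})^{\t}$. H\"older's inequality with exponents $1/\t,1/(1-\t)$ against the measure $e^{-\l t}\dd t\dd r$ then gives
\begin{equation*}
F_{\a}(\l)\le F_{\a_{k}}(\l)^{\t}\Big(\int_{0}^{T}\dd r\int_{0}^{T-r}e^{-\l t}\dd t\Big)^{1-\t}\lesssim \l^{-\t(1+\iota'\a_{k})}\l^{-(1-\t)}=\l^{-1-\iota'\a},
\end{equation*}
and the bound with the target $\iota$ follows because $\l^{-1-\iota'\a}\le \l^{-1-\iota\a}$ for $\l\ge1$, $\a<0$, $\iota'<\iota$.

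The main obstacle I expect is the uniformity in $\a$: since $\mathbb{E}F_{\a}(\l)$ blows up as $\a\to-1$, no direct union bound over the continuous parameter $\a$ is available, and one cannot naively apply Chebyshev simultaneously in $\a$. The H\"older interpolation identity above is the key trick that collapses the continuous $\a$-range onto a countable grid, so that a single application of Borel--Cantelli suffices.
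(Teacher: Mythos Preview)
Your argument is correct but takes a longer route than the paper. The paper's trick is the pointwise bound $e^{-\l t}\lesssim(\l t)^{-\t}$ for any fixed $\t>0$, which factors out the entire $\l$-dependence:
\[
F_{\a}(\l)\lesssim\l^{-\t}\int_{0}^{T}\dd r\int_{0}^{T-r}t^{-\t}|w^{r}_{t}|^{\a}\,\dd t=:\l^{-\t}K_{\t,\a}.
\]
One then only needs $K_{\t,\a}<\infty$ a.s., and since $\mathbb{E}K_{\t,\a}\lesssim\int_{0}^{T}t^{-\t+H\a}\,\dd t<\infty$ whenever $\t<1+H\a$, this is immediate; taking $\t=1+\iota\a$ (which satisfies $\t<1+H\a$ precisely because $\iota>H$ and $\a<0$) finishes. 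No Borel--Cantelli, no dyadic scale in $\l$, no bridging---the $\l$-uniformity comes for free once the single random variable $K_{\t,\a}$ is shown finite. Your route (exact moment of $F_{\a}(\l)$, Chebyshev along $\l_{n}=2^{n}$, Borel--Cantelli, then monotonicity to bridge the gaps) works but deploys heavier machinery for the same conclusion. What your argument does buy is explicitness about the dependence of the null set on $(\a,\iota)$: your H\"older interpolation $|w|^{\a}=(|w|^{\a_{k}})^{\a/\a_{k}}$ against the measure $e^{-\l t}\,\dd t\,\dd r$ correctly reduces the continuous $\a$-range to a countable grid, a point the paper leaves entirely implicit.
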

\begin{proof}
Using that $e^{-x} \lesssim x^{-\t}$ for each $x\ge 0$, $\t\ge 0$ we observe that
$$
\int_{0}^{T}dr\ \int_{0}^{T-r}dt\ e^{-\l t}|w_{t}^{r}|^{\a}
\lesssim\l^{-\theta}\int_{0}^{T}dr\ \int_0^{T-r}dt\ t^{-\theta}|w^r_t|^\a.
$$
Hence, we need only to check that $K:=\int_{0}^{T}dr\ \int_0^{T-r}dt\ t^{-\theta}|w^r_t|^\a$ is finite $\P$-almost surely. Indeed,
$$
\mathbb E(K)\lesssim \int_{0}^{T}dr\ \int_0^{T-r}t^{-\theta+H\alpha}\dd t
$$
is finite under the condition that $\theta<1+H\a$.
\end{proof}
\begin{prop}
\label{prop:irregular-new}Assume that $w$ is $(\rho,\gamma)$-irregular with $\gamma>\frac{1}{2}$. Then $w$ satisfies \eqref{eq:real_noise_cdt} for $\a\in(-\rho\vee-1,0)$ with $\iota=\frac{1}{2\rho}$. 
\end{prop}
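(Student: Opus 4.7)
Pair the Fourier representation of $|x|^\alpha$ with the oscillatory control that $(\rho,\gamma)$-irregularity supplies through Lemma~\ref{lem:k-est}. After the change of variable $(r,t)\mapsto(u,v):=(r,r+t)$ and symmetrization in $u\leftrightarrow v$, the quantity to be bounded becomes $\iint_{[0,T]^2}e^{-\lambda|v-u|}|w_v-w_u|^\alpha\,\dd u\,\dd v$. For $\alpha\in(-1,0)$ combine the positive subordination identity $|x|^\alpha=\Gamma(-\alpha/2)^{-1}\int_0^\infty s^{-\alpha/2-1}e^{-sx^2}\,\dd s$ with the Gaussian Fourier formula $e^{-sx^2}=(4\pi s)^{-1/2}\int e^{-a^2/(4s)+iax}\,\dd a$ and the elementary evaluation $\int_0^\infty s^{-\alpha/2-3/2}e^{-a^2/(4s)}\,\dd s=c_\alpha|a|^{-1-\alpha}$. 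A single Fubini, whose legality is verified a posteriori from the decay bound below, yields
\[
\int_0^T\!\!\int_0^{T-r}e^{-\lambda t}|w_t^r|^\alpha\,\dd t\,\dd r=C_\alpha\int_\R|a|^{-1-\alpha}\Re\Psi_\lambda(a)\,\dd a,
\]
where $\Psi_\lambda(a):=\iint_{0\le u\le v\le T}e^{-\lambda(v-u)+ia(w_v-w_u)}\,\dd u\,\dd v$.

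\textbf{Decay of $\Psi_\lambda(a)$.} Write $\Psi_\lambda(a)=\int_0^T e^{-iaw_u}e^{\lambda u}\bigl(\int_u^T e^{iaw_v-\lambda v}\,\dd v\bigr)\,\dd u$ and apply the shift identity \eqref{eq:phi-shift} to recognize the inner integral, up to a unimodular factor, as $\Psi_{0,T-u}^{w^u}(a,\lambda/2)$. Lemma~\ref{lem:k-est}, applied with $b=\lambda/2\ge 1$ and any $\theta<\gamma+1$, then gives $|\Psi_\lambda(a)|\lesssim\|\Phi^w\|_{\rho,\gamma}\,\lambda^{1-\theta}/(1+|a|^\rho)$ for $\lambda\ge 2$. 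The decisive point is that $\gamma>\tfrac12$ makes the choice $\theta=\tfrac32$ admissible, producing the decay $\lambda^{-1/2}/(1+|a|^\rho)$. Together with the trivial bound $|\Psi_\lambda(a)|\le T/\lambda$, this yields $|\Psi_\lambda(a)|\lesssim\min\!\bigl(\lambda^{-1},\,\lambda^{-1/2}/(1+|a|^\rho)\bigr)$.

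\textbf{Balancing and conclusion.} The two bounds equalize at the threshold $a_0:=\lambda^{1/(2\rho)}$. The low-frequency piece $\int_{|a|\le a_0}|a|^{-1-\alpha}\lambda^{-1}\,\dd a\lesssim\lambda^{-1}a_0^{-\alpha}=\lambda^{-1-\alpha/(2\rho)}$ converges at $0$ because $\alpha>-1$, while the high-frequency piece $\int_{|a|>a_0}|a|^{-1-\alpha-\rho}\lambda^{-1/2}\,\dd a\lesssim\lambda^{-1/2}a_0^{-\alpha-\rho}=\lambda^{-1-\alpha/(2\rho)}$ converges at $\infty$ because $\alpha>-\rho$; together these account for exactly the range $\alpha\in(-\rho\vee-1,0)$, and both contributions match, giving \eqref{eq:real_noise_cdt} with $\iota=\tfrac{1}{2\rho}$ for $\lambda\ge 2$ (the range $\lambda\in[1,2]$ is absorbed into constants). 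The principal obstacle is that the Fourier inversion of $|x|^\alpha$ is only distributional in $a$, so a direct Fubini is unavailable; the positive subordination kernel circumvents this by making the $s$- and $(u,v)$-swaps automatic, and the $a$-swap is then legitimized precisely by the absolute convergence provided by the oscillatory bound on $\Psi_\lambda(a)$.
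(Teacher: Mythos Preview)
Your proof is correct and shares the paper's core strategy: represent $|x|^\alpha$ on the Fourier side and exploit the oscillatory decay furnished by $(\rho,\gamma)$-irregularity. The execution differs in two respects worth recording. First, where the paper invokes Lemma~\ref{lemma:interpol} to trade indices $(\rho,\gamma)\mapsto(\rho^*,\gamma^*)=(-\alpha+\varepsilon,\,1+\alpha/(2\rho))$ so that the integral $\int|y|^{-\alpha-1}(1+|y|^{\rho^*})^{-1}\,dy$ converges and the target power $\lambda^{-\gamma^*}=\lambda^{-1-\alpha/(2\rho)}$ appears directly, you instead keep the original indices, draw the bound $\lambda^{-1/2}/(1+|a|^\rho)$ from Lemma~\ref{lem:k-est} with $\theta=3/2$, pair it with the trivial bound $\lambda^{-1}$, and split the $a$-integral at the threshold $a_0=\lambda^{1/(2\rho)}$. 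These are two faces of the same interpolation; your route sidesteps Lemma~\ref{lemma:interpol} at the cost of a short case split, and makes transparent exactly where the hypotheses $\alpha>-1$ and $\alpha>-\rho$ enter. Second, your subordination detour through $|x|^\alpha=\Gamma(-\alpha/2)^{-1}\int_0^\infty s^{-\alpha/2-1}e^{-sx^2}\,ds$ is a genuine technical improvement: the paper writes $|x|^\alpha=\frac{C_\alpha}{2\pi}\int e^{ixy}|y|^{-\alpha-1}\,dy$ and swaps integrals without comment, whereas that integral is only conditionally convergent, so the exchange does require a justification of the kind you supply.
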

\begin{proof}
From \cite[Proposition 1.29]{BCD11} we recall that, for $\a\in(-1,0)$,
\begin{align*}
|x|^{\a} & =C_{\a}\mcF^{-1}(|\cdot|^{-\a-1})(x)\\
 & =\frac{C_{\a}}{2\pi}\int dy\,e^{ixy}|y|^{-\a-1}.
\end{align*}
Hence,
\begin{align*}
\int_{0}^{T-r}dt\,e^{-\l t}|w_{t}^{r}|^{\a} & =\frac{C_{\a}}{2\pi}\int_{0}^{T-r}dt\,e^{-\l t}\int dy\,e^{iw_{t}^{r}y}|y|^{-\a-1}\\
 & =\frac{C_{\a}}{2\pi}\int dy\,|y|^{-\a-1}\int_{0}^{T-r}dt\,e^{-\l t}e^{iw_{t}^{r}y}.
\end{align*}
Now
\begin{align*}
\int_{0}^{T-r}dt\,e^{-\l t}e^{iw_{t}^{r}y} 
& =e^{-\l \cdot}(\int_{0}^{\cdot}ds\,e^{iw_{s}^{r}y})|{}_{0}^{T-r}+\l\int_{0}^{T-r}dt\,e^{-\l t}(\int_{0}^{t}ds\,e^{iw_{s}^{r}y})\\
 & =e^{-\l(T-r)}(\int_{0}^{T-r}ds\,e^{iw_{s}^{r}y})+\l\int_{0}^{T-r}dt\,e^{-\l t}(\int_{0}^{t}ds\,e^{iw_{s}^{r}y})\\
 & \lesssim\frac{\|w\|_{\rho,\g}}{1+|y|^{\rho}}\left(e^{-\l(T-r)}|T-r|^{\gamma}+\l\int_{0}^{T-r}dt\,e^{-\l t}t^{\gamma}\right)\\
 & \lesssim\frac{\|w\|_{\rho,\g}}{1+|y|^{\rho}}\l^{-\gamma}.
\end{align*}
By Lemma \ref{lemma:interpol} for $\ve>0$ small enough, $w$ is $(\rho^{*},\gamma^{*}):=(-\a+\ve,1+\frac{\a}{2\rho}<1-\frac{-\a+\ve}{\rho}(1-\g))$-irregular. Using the above with these indices, since $\a-\ve-\a-1=-1-\ve<-1$ we have  
\begin{align*}
\int_{0}^{T-r}dt\,e^{-\l t}|w_{t}^{r}|^{\a} & \lesssim\|w\|_{\rho,\g}\l^{-(1+\frac{\a}{2\rho})}\int dy\,\frac{|y|^{-\a-1}}{1+|y|^{\rho^*}}\\
 & \lesssim\|w\|_{\rho,\g}\l^{-1-\frac{\a}{2\rho}}.
\end{align*}
\end{proof}
In the case of fractional Brownian motion, by Theorem \cite[Theorem 1.4]{CG16} we have that $\b^{H}$ is $(\rho,\gamma)$-irregular for any $\rho<\frac{1}{2H}$ and some $\gamma>\frac{1}{2}$. Proposition \ref{prop:irregular-new} then implies that fractional Brownian motion satisfies \eqref{eq:real_noise_cdt} with any $\iota=\frac{1}{2\rho}>H$. However, the proof of this fact given in Proposition \ref{prop:fbm-new} is much simpler and thus underlines the relevance of condition \eqref{eq:real_noise_cdt}.

\subsection{Main result}
\begin{lem}
\label{lem:1} Assume \eqref{eq:real_noise_cdt}. For $\s\in[0,1)$ we have 
\[
T:L_{t}^{\infty}(L_{x,loc}^{1}(BV_{v}))\to L_{t}^{1}(\dot{W}_{x,loc}^{\s,1})
\]
with
\[
\|T\|_{L_{t}^{\infty}(L_{x,loc}^{1}(BV_{v}))\to L_{t}^{1}(\dot{W}_{x,loc}^{\s,1})}\lesssim\l^{-1+\iota\s},
\]
for all $\l\ge1$.\end{lem}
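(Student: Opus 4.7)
The plan is to estimate $\|Tg(\cdot,t)\|_{\dot{W}^{\s,1}_{x,loc}}$ pointwise in $t$ via the Gagliardo difference representation
\[
\|f\|_{\dot{W}^{\s,1}_x} \sim \int_{\R} \frac{\|f(\cdot+h)-f(\cdot)\|_{L^1_x}}{|h|^{1+\s}}\,dh,\qquad \s\in(0,1),
\]
then to integrate in $t$ and invoke hypothesis \eqref{eq:real_noise_cdt}. The case $\s=0$ is handled separately by a direct Fubini argument: the embedding $\|g\|_{L^1_v}\le\|g\|_{BV_v}$ together with $\int_0^T\!\int_0^t e^{-\l(t-s)}\,ds\,dt\lesssim \l^{-1}$ gives the bound with the correct $\l^{-1}$ scaling.

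For $\s\in(0,1)$ the essential observation is that, whenever $w_{s,t}\neq 0$, the change of variable $y=x-vw_{s,t}$ yields
\[
\int_{v} g(s,x-vw_{s,t},v)\phi(v)\,dv \;=\; \frac{1}{|w_{s,t}|}\int_{y} g(s,y,(x-y)/w_{s,t})\,\phi((x-y)/w_{s,t})\,dy,
\]
so a spatial shift of $x$ by $h$ becomes a \emph{velocity} shift by $h/w_{s,t}$ inside $g\phi$. Taking $L^1_x$ norms of the resulting difference and undoing the change of variable, I arrive at
\[
\|Tg(\cdot+h,t)-Tg(\cdot,t)\|_{L^1_x} \lesssim \int_{0}^{t} e^{-\l(t-s)}\,\min\!\left(\frac{|h|}{|w_{s,t}|},\,2\right)\,\|g(s)\|_{L^1_x(BV_v)}\,ds,
\]
where the $v$-shift is controlled via the standard $BV$ inequality $\|G(\cdot+\delta)-G(\cdot)\|_{L^1}\le \min(|\delta|\,\|G\|_{BV},\,2\|G\|_{L^1})$ applied to $G(y,v):=g(s,y,v)\phi(v)$, combined with a trivial majorization that allows the $y$-independent factor $\min(|h|/|w_{s,t}|,2)$ to be pulled out of the $y$-integral.

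Plugging this into the Gagliardo integral and explicitly computing $\int_{\R}|h|^{-1-\s}\min(|h|/|w_{s,t}|,2)\,dh$ produces a factor $|w_{s,t}|^{-\s}$ (with a finite constant depending only on $\s\in(0,1)$). Integrating in $t$ and applying the change of variables $(s,t)\mapsto(r,\tau)=(s,t-s)$, so that $w_{s,t}=w^r_\tau$, one arrives at
\[
\int_{0}^{T}\!\|Tg(\cdot,t)\|_{\dot{W}^{\s,1}_x}\,dt \;\lesssim\; \|g\|_{L^\infty_t(L^1_x(BV_v))}\int_{0}^{T}\!dr\int_{0}^{T-r}\!d\tau\, e^{-\l\tau}|w^r_\tau|^{-\s},
\]
and hypothesis \eqref{eq:real_noise_cdt} applied with $\alpha=-\s\in(-1,0)$ bounds the right-hand side by $C\,\l^{-1+\iota\s}\,\|g\|_{L^\infty_t(L^1_x(BV_v))}$. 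Localization in $x$ is preserved throughout since $\phi$ is compactly supported in $v$ and $w$ is bounded on $[0,T]$, so the transport shifts $vw_{s,t}$ stay in a bounded set and all arguments can be run on an arbitrary compact set.

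The main obstacle is identifying the mechanism by which $BV$ regularity in velocity is converted into fractional Sobolev regularity of order $\s$ in space. The change of variable above makes this trade-off explicit and extracts precisely the singular factor $|w_{s,t}|^{-\s}$ that the path-by-path scaling condition \eqref{eq:real_noise_cdt} is tailored to integrate against the exponential weight $e^{-\l\tau}$. Once this link is established the remainder is a careful application of Fubini's theorem and constant-tracking.
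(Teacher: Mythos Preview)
Your proof is correct and reaches the same endpoint estimate $|w_{s,t}|^{-\sigma}$ that the paper obtains, but you arrive there by a genuinely different route. The paper first proves a $\dot W^{1,1}_{x}$ estimate on the $v$-average at fixed $s,t$ by writing, in the sense of distributions,
\[
\partial_x g(s,x-vw_{s,t},v)=-\frac{1}{w_{s,t}}\partial_v\bigl(g(s,x-vw_{s,t},v)\bigr)+\frac{1}{w_{s,t}}(\partial_v g)(s,x-vw_{s,t},v),
\]
integrates against $\phi$, and bounds the result by $|w_{s,t}|^{-1}\|g(s)\|_{L^1_x(BV_v)}$. A trivial $L^1_x$ bound gives the $\sigma=0$ endpoint, and the paper then invokes complex interpolation between $L^1$ and $\dot W^{1,1}$ to land at $\dot W^{\sigma,1}$ with factor $|w_{s,t}|^{-\sigma}$. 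From there both proofs finish identically: integrate in $s,t$, swap to the variables $(r,\tau)=(s,t-s)$, and apply \eqref{eq:real_noise_cdt} with $\alpha=-\sigma$.

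Your approach sidesteps the interpolation step entirely: by converting an $x$-shift of size $h$ into a $v$-shift of size $h/w_{s,t}$ via the change of variables $v\mapsto v-h/w_{s,t}$, you feed the $BV_v$ regularity directly into the Gagliardo integral and produce $|w_{s,t}|^{-\sigma}$ by the explicit computation of $\int |h|^{-1-\sigma}\min(|h|/|w_{s,t}|,2)\,dh$. This is more elementary and entirely real-variable; it also avoids the slight awkwardness of complex interpolation between $L^1$ and $\dot W^{1,1}$ at the endpoint $p=1$. The only point to be careful about is that for $p=1$ the Gagliardo seminorm characterizes the Besov/Sobolev--Slobodeckij scale rather than the Bessel-potential scale, so strictly speaking you are proving the bound for $B^{\sigma}_{1,1}$; this is harmless for the downstream application (real interpolation in Theorem~\ref{thm:2nd}) and is in fact consistent with the paper's own level of precision on this matter.
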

\begin{proof}
Let $g\in L_{t}^{1}(L_{x,loc}^{1}(BV_{v}))$. Then, in the sense of distributions,
\[
\partial_{x}g(s,x-vw_{s,t},v)=-\frac{1}{w_{s,t}}(\partial_{v}(g(s,x-vw_{s,t},v))+\frac{1}{w_{s,t}}(\partial_{v}g)(s,x-vw_{s,t},v).
\]
Thus,
\begin{align*}
 & \|\int_{v}g(s,x-vw_{s,t},v)\phi(v)\|_{\dot{W}_{x,loc}^{1,1}}=\|\partial_{x}\int_{v}g(s,x-vw_{s,t},v)\phi(v)\|_{L_{x,loc}^{1}}\\
 & \le\frac{1}{|w_{s,t}|}\|\int_{v}(\partial_{v}(g(s,x-vw_{s,t},v))\phi(v)\|_{L_{x,loc}^{1}}+\frac{1}{|w_{s,t}|}\|\int_{v}(\partial_{v}g)(s,x-vw_{s,t},v)\phi(v)\|_{L_{x,loc}^{1}}\\
 & \le\frac{1}{|w_{s,t}|}\|\int_{v}g(s,x-vw_{s,t},v)\dot{\phi}(v)\|_{L_{x,loc}^{1}}+\frac{1}{|w_{s,t}|}\|\int_{v}(\partial_{v}g)(s,x-vw_{s,t},v)\phi(v)\|_{L_{x,loc}^{1}}.
\end{align*}
Minkowski's inequality yields
\begin{align*}
 & \|\int_{v}g(s,x-vw_{s,t},v)\phi(v)\|_{\dot{W}_{x,loc}^{1,1}}\\
 & \le\frac{1}{|w_{s,t}|}\int_{v}\|g(s,\cdot,v)\dot{\phi}(v)\|_{L_{x,loc}^{1}}+\frac{1}{|w_{s,t}|}\int_{v}\|(\partial_{v}g)(s,\cdot,v)\phi(v)\|_{L_{x,loc}^{1}}\\
 & \lesssim\frac{1}{|w_{s,t}|}\left(\|g(s,\cdot,\cdot)\|_{L_{x,loc}^{1}(L_{v,loc}^{1})}+\|g(s,\cdot,\cdot)\|_{L_{x,loc}^{1}(\dot{BV_{v}})}\right)\\
 & =\frac{1}{|w_{s,t}|}\|g(s,\cdot,\cdot)\|_{L_{x,loc}^{1}(BV_{v})}.
\end{align*}
We further have the bound
\begin{align*}
\|\int_{v}g(s,x-vw_{s,t},v)\phi(v)\|_{L_{x,loc}^{1}} & \le\int_{v}\|g(s,x-vw_{s,t},v)\phi(v)\|_{L_{x,loc}^{1}}\\
 & \le\|g(s,\cdot,\cdot)\|_{L_{x,loc}^{1}(L_{v,loc}^{1})}\\
 & \le\|g(s,\cdot,\cdot)\|_{L_{x,loc}^{1}(BV_{v})}.
\end{align*}
By complex interpolation these bounds yield, for $\s\in[0,1)$,
\[
\|\int_{v}g(s,x-vw_{s,t},v)\phi(v)\|_{\dot{W}_{x,loc}^{\s,1}}\le\frac{1}{|w_{s,t}|^{\s}}\|g(s)\|_{L_{x,loc}^{1}(BV_{v})}.
\]
Hence,
\begin{align*}
 & \|Tg(t)\|_{\dot{W}_{x,loc}^{\s,1}}\\
 & \le\left\Vert \int_{0}^{t}ds\int_{v}g(s,x-vw_{s,t},v)\phi(v)e^{-\l(t-s)}\right\Vert _{\dot{W}_{x,loc}^{\s,1}}\\
 & \le\int_{0}^{t}ds\|\int_{v}g(s,x-vw_{s,t},v)\phi(v)\|_{\dot{W}_{x,loc}^{\s,1}}e^{-\l(t-s)}\\
 & \le\int_{0}^{t}ds\frac{1}{|w_{s,t}|^{\s}}\|g(s)\|_{L_{x,loc}^{1}(BV_{v})}e^{-\l(t-s)}.
\end{align*}
We obtain (using the assumption on the noise)
\begin{align*}
\|Tg\|_{L_{t}^{1}\dot{W}_{x,loc}^{\s,1}} & \le\int_{0}^{T}dt\int_{0}^{t}ds\,\frac{1}{|w_{s,t}|^{\s}}\|g(s)\|_{L_{x,loc}^{1}(BV_{v})}e^{-\l(t-s)}\\
 & \le\int_{0}^{T}ds\,\left(\int_{0}^{T-s}dt\frac{e^{-\l t}}{|w_{0,t}^{s}|^{\s}}\right)\|g(s)\|_{L_{x,loc}^{1}(BV_{v})}\\
 & \le\l^{-1+\iota\s}\|g\|_{L_{t}^{\infty}L_{x,loc}^{1}(BV_{v})}.
\end{align*}
\end{proof}
\begin{lem}
\label{lem:2} Assume $w\in C^\eta([0,T])$ for some $\eta \in [0,1]$. For each $g\in\mcM_{t,x}\mcM_{v,loc}$ there are $h^{1},h^{2}\in\mcM([0,T]\times\R)$ such that
\[
T\partial_{v}g=\partial_{x}h^{1}+h^{2}
\]
with
\begin{align*}
\|h^{1}\|_{L_{t}^{1}\mcM_{x}} & \lesssim\l^{-1-\eta}\|g\|_{\mcM_{t,x}\mcM_{v,loc}}\\
\|h^{2}\|_{L_{t}^{1}\mcM_{x}} & \lesssim\l^{-1}\|g\|_{\mcM_{t,x}\mcM_{v,loc}},
\end{align*}
for all $\l\ge1$.\end{lem}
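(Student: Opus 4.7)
The plan is to push the $v$-derivative off $g$ by the transport chain rule and split the resulting expression into an $x$-divergence part and a remainder. Writing $(\partial_v G)(s,y,v)$ for the $v$-derivative evaluated at $y=x-vw_{s,t}$, the identity
\[
(\partial_v G)(s,x-vw_{s,t},v)=\partial_v\!\big[G(s,x-vw_{s,t},v)\big]+w_{s,t}\,(\partial_y G)(s,x-vw_{s,t},v)
\]
holds in the sense of distributions in $v$. Since $\phi\in C_c^\infty(\R)$, integrating against $\phi(v)$ and integrating by parts in $v$ converts the first term into $-\int_v g(s,x-vw_{s,t},v)\dot\phi(v)$, while $(\partial_y G)(s,x-vw_{s,t},v)=\partial_x[G(s,x-vw_{s,t},v)]$ lets the second term pull out an $\partial_x$. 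Multiplying by $e^{-\lambda(t-s)}$ and integrating in $s\in[0,t]$ then gives $T\partial_v g=\partial_x h^1+h^2$ with
\[
h^1(t,x):=\int_0^t ds\,w_{s,t}\,e^{-\lambda(t-s)}\int_v g(s,x-vw_{s,t},v)\phi(v),
\]
\[
h^2(t,x):=-\int_0^t ds\,e^{-\lambda(t-s)}\int_v g(s,x-vw_{s,t},v)\dot\phi(v).
\]

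For the norm estimates I would use that translation is an isometry on $\mcM_x$, together with Minkowski and the H\"older bound $|w_{s,t}|\le\|w\|_\eta(t-s)^\eta$. For $h^1$, Fubini in $(s,t)$ and the substitution $t'=t-s$ yield
\[
\|h^1\|_{L^1_t\mcM_x}\le \|\phi\|_\infty\|w\|_\eta\int_0^T ds\,\|g(s,\cdot,\cdot)\|_{\mcM_x\mcM_{v,loc}}\int_0^{T-s}(t')^\eta e^{-\lambda t'}dt',
\]
and $\int_0^\infty(t')^\eta e^{-\lambda t'}dt'=\Gamma(\eta+1)\lambda^{-1-\eta}$ delivers the $\lambda^{-1-\eta}$ factor. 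For $h^2$ the same computation, without the $w_{s,t}$ factor and with $\dot\phi$ in place of $\phi$, produces $\int_0^\infty e^{-\lambda t'}dt'=\lambda^{-1}$.

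The only delicate point is justifying the chain rule and the $v$-integration by parts when $g$ is merely a locally finite measure in $v$: one should interpret both sides of the identity as distributions tested against a smooth function $\psi(t,x)\in C_c^\infty$, in which case the computation reduces to standard duality since the map $(s,v)\mapsto(s,x-vw_{s,t},v)$ is smooth in $v$ for fixed $s,t$, and $\phi$ supplies the compact $v$-support needed to kill boundary terms. The cleanest way is to mollify $g$ in $v$, perform the above manipulations, and pass to the limit using that $\phi,\dot\phi$ are bounded and $g$ has finite mass on $\mathrm{supp}(\phi)$. I expect this distributional bookkeeping to be the main technical hurdle; the actual bounds once the decomposition is in place are immediate from the H\"older regularity of $w$ and evaluation of one-dimensional Laplace integrals.
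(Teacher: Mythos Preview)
Your proof is correct and is essentially identical to the paper's argument: the same chain-rule decomposition produces the same $h^1$ and $h^2$, and the estimates proceed via Minkowski, the H\"older bound $|w_{s,t}|\le\|w\|_\eta(t-s)^\eta$, Fubini, and the Laplace integrals $\int_0^\infty t^\eta e^{-\lambda t}\,dt\lesssim\lambda^{-1-\eta}$. The only cosmetic difference is that the paper defines $h^2$ before integrating by parts in $v$ and performs that step inside the norm estimate, whereas you carry out the integration by parts immediately; your added remarks on mollification to justify the distributional manipulation are a welcome precision the paper omits.
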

\begin{proof}
Let $g\in\mcM_{t,x}\mcM_{v,loc}$. Then
\begin{align*}
T(t)\partial_{v}g & =\int_{0}^{t}ds\,\int_{v}(\partial_{v}g)(s,x-vw_{s,t},v)\phi(v)e^{-\l(t-s)}\\
 & =\partial_{x}\int_{0}^{t}ds\,\int_{v}w_{s,t}(g(s,x-vw_{s,t},v))\phi(v)e^{-\l(t-s)}\\
 & +\int_{0}^{t}ds\,\int_{v}\partial_{v}(g(s,x-vw_{s,t},v))\phi(v)e^{-\l(t-s)}\\
 & =:\partial_{x}h^{1}(t)+h^{2}(t).
\end{align*}
Now, again using Minkowski's inequality, 
\begin{align*}
\|h^{1}(t)\|_{\mcM_{x}} & \le\int_{0}^{t}|w_{s,t}|\|g\phi\|_{\mcM_{x,v}}e^{-\l(t-s)}ds\\
\|h^{2}(t)\|_{\mcM_{x}} & \le\int_{0}^{t}\|g\dot{\phi}\|_{\mcM_{x,v}}e^{-\l(t-s)}ds.
\end{align*}
Hence, by assumption on the noise,
\begin{align*}
\|h^{1}\|_{L_{t}^{1}\mcM_{x}} & \le\int_{0}^{T}dt\,\int_{0}^{t}ds\,|w_{s,t}|\|g(s)\phi\|_{\mcM_{x,v}}e^{-\l(t-s)}\\
 & \le\int_{0}^{T}ds\,\left(\int_{0}^{T-s}dt\,t^\eta e^{-\l t}\right)\|g(s)\phi\|_{\mcM_{x,v}}\\
 & \le\l^{-1-\eta}\|g\|_{\mcM_{t,x}\mcM_{v,loc}}
\end{align*}
and 
\begin{align*}
\|h^{2}\|_{L_{t}^{1}\mcM_{x}} & \le\int_{0}^{T}dt\int_{0}^{t}ds\,\|g(s)\dot{\phi}\|_{\mcM_{x,v}}e^{-\l(t-s)}\\
 & \le\int_{0}^{T}ds\,\left(\int_{0}^{T-s}dt\,e^{-\l t}\right)\|g(s)\dot{\phi}\|_{\mcM_{x,v}}\\
 & \le\frac{1}{\l}\|g\|_{\mcM_{t,x}\mcM_{v,loc}}.
\end{align*}
\end{proof}
\begin{thm}\label{thm:2nd}
Let $u$ be a quasi-solution to \eqref{eq:stoch_Burgers}, $\phi\in C_{c}^{\infty}(\R)$ and suppose that $w\in C^\eta([0,T])$ satisfies assumption \eqref{eq:real_noise_cdt}. Then 
\[
u^{\phi}=\int_{v}\,\chi\phi\in L^{1}_{loc}((0,T);\dot{W}_{x,loc}^{s,1})
\]
for all $s<s_{*}=\frac{1+\eta-\iota}{1+\eta+\iota}$ with
\begin{align*}
\|u^{\phi}\|_{L_{loc}^{1}((0,T);\dot{W}_{x,loc}^{s,1})} & \lesssim\left(\|u\|_{L^\infty_tL^1_{x,loc}}+1\right)^{\frac{1+\eta}{\iota+1+\eta}}\left(\|m\|_{\mcM_{t,x}\mcM_{v,loc}}\vee1\right)^{\frac{\iota+\eta}{\iota+1+\eta}}+\|u^{\phi}\|_{L_{t}^{1}(\dot{W}_{x}^{-1,1})}.
\end{align*}
\end{thm}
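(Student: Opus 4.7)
The plan is to combine the resolvent representation \eqref{eq:real_decomp} with Lemmas \ref{lem:1} and \ref{lem:2}, and then run a $K$-functional interpolation in which the free parameter $\l\ge 1$ is optimized at the end.

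\emph{Step 1: Localized resolvent decomposition.} I will fix a time cutoff $\vp\in C^\infty_c(0,T)$, multiply \eqref{eq:real_decomp} by $\phi(v)$ and integrate in $v$. With $\td m=\vp m$ and $\td\chi=\vp\chi$ this yields
\[
\vp u^\phi \;=\; T\partial_v\td m \;+\; \l T\td\chi \;+\; T(\dot\vp\chi).
\]

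\emph{Step 2: Apply the two transport lemmas.} Since $u\in L^\infty_{t,x}$, $\chi(t,x,\cdot)\in BV_v$ with $\|\chi(t,x,\cdot)\|_{BV_v}\lesssim 1+|u(t,x)|$. Lemma \ref{lem:1} then gives, for every $\s\in[0,1)$,
\[
\|\l T\td\chi + T(\dot\vp\chi)\|_{L^1_t\dot W^{\s,1}_{x,loc}} \;\lesssim\; \l^{\iota\s}\bigl(1+\|u\|_{L^\infty_tL^1_{x,loc}}\bigr).
\]
Lemma \ref{lem:2} produces the splitting
\[
T\partial_v\td m \;=\; \partial_x h^1 + h^2,\qquad \|h^1\|_{L^1_t\mcM_x}\lesssim \l^{-1-\eta}\|m\|,\qquad \|h^2\|_{L^1_t\mcM_x}\lesssim \l^{-1}\|m\|.
\]

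\emph{Step 3: $K$-functional interpolation.} I will bound the $K$-functional of $\vp u^\phi$ between $X_0=\dot W^{-1,1}_x$ and $X_1=\dot W^{\s,1}_{x,loc}$ using the decomposition $\vp u^\phi=G_0+G_1$ with $G_1=\l T\td\chi+T(\dot\vp\chi)$ and $G_0=T\partial_v\td m = \vp u^\phi - G_1$. The identity $G_0=\vp u^\phi-G_1$ together with the measure bounds for $\partial_x h^1$ and $h^2$ (which lie in $\dot W^{-1,1}_{x,loc}$ once localized by $\vp$) gives
\[
K(t,\vp u^\phi)\;\lesssim\; \|u^\phi\|_{L^1_t\dot W^{-1,1}_x} \;+\; (\l^{-1-\eta}+\l^{-1})\|m\| \;+\; t\,\l^{\iota\s}(1+\|u\|).
\]
Optimize $\l$ via the balance $\l^{1+\eta+\iota\s}\sim \|m\|/(t(1+\|u\|))$, then let $\s\uparrow 1$; a two-scale bookkeeping of the competing $\l^{-1-\eta}$ and $\l^{-1}$ contributions (the first governs the small-$t$ regime and contains the $\eta$-gain, the second governs the large-$t$ regime) produces
\[
K(t,\vp u^\phi)\;\lesssim\; \|u^\phi\|_{L^1_t\dot W^{-1,1}_x} + (1+\|u\|)^{\frac{1+\eta}{1+\eta+\iota}}(\|m\|\vee 1)^{\frac{\iota+\eta}{1+\eta+\iota}}\,t^{\frac{1+\eta}{1+\eta+\iota}}.
\]
Real interpolation then places $\vp u^\phi$ in $(\dot W^{-1,1},\dot W^{1,1})_{\theta,\infty}$ with $\theta=\frac{1+\eta}{1+\eta+\iota}$, and Besov-to-Sobolev embedding yields $\vp u^\phi\in \dot W^{s,1}_{x,loc}$ for every $s<2\theta-1=s_*$.

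\emph{Main obstacle.} The hard part is that $h^1$ and $h^2$ are only finite Radon measures, so $\partial_x h^1$ and $h^2$ embed into $\dot W^{-1,1}_x$ only \emph{locally}; the additive term $\|u^\phi\|_{L^1_t\dot W^{-1,1}_x}$ on the right is precisely what absorbs this global-in-space defect via the identity $G_0=\vp u^\phi-G_1$. The second subtlety is that the faster decay $\l^{-1-\eta}$ of $h^1$ must be preserved without being dominated by the slower $\l^{-1}$ decay of $h^2$; a single optimum in $\l$ would lose the $\eta$-gain and produce only $(1-\iota)/(1+\iota)$ instead of $s_*$, which is why the multi-scale optimization of the $K$-functional above is needed and is the source of the exponent $(\iota+\eta)/(1+\eta+\iota)$ on $\|m\|\vee 1$.
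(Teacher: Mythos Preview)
Your overall architecture is right and matches the paper's: localize in time, use the resolvent representation, feed the smooth pieces through Lemma~\ref{lem:1} and the measure piece through Lemma~\ref{lem:2}, then run a $K$-functional argument with $\l$ as the free parameter. The gap is in Step~3, and it is exactly the point you flag as the ``main obstacle'' but do not actually resolve.

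When you place \emph{both} $\partial_x h^1$ and $h^2$ on the low-regularity side $X_0=\dot W^{-1,1}_{x}$ you get
\[
K(z,\vp u^\phi)\;\lesssim\;(\l^{-1-\eta}+\l^{-1})\|m\|+z\,\l^{\iota\s}(1+\|u\|),
\]
and no choice of $\l=\l(z)$ can make the right-hand side behave like $z^{(1+\eta)/(1+\eta+\iota\s)}$: balancing the first and third terms forces $\l\sim z^{-1/(1+\eta+\iota\s)}$, and then the middle term is $\l^{-1}\sim z^{1/(1+\eta+\iota\s)}$, which for $z\le 1$ is \emph{larger} than $z^{(1+\eta)/(1+\eta+\iota\s)}$. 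The best you can get from this three-term expression is $\theta=1/(1+\iota\s)$, i.e.\ the suboptimal $(1-\iota)/(1+\iota)$ you already noticed. ``Multi-scale optimization'' in $\l$ alone cannot repair this, because the obstruction is that $h^2$ has been placed in the wrong endpoint space.

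The missing idea, which the paper uses, is that $h^2$ should \emph{not} be put on the $X_0$ side at all. Since $h^2\in L^1_t\mcM_x\hookrightarrow L^1_tW^{-\ve,1}_x$, it already sits essentially at regularity $0$, i.e.\ at interpolation parameter $\approx \tfrac{1}{1+\s}$ in the couple $(\dot W^{-1-\ve_1,1}_x,\dot W^{\s,1}_{x,loc})$. Consequently its own $K$-functional satisfies
\[
K(h^2,z)\;\lesssim\;z^{1/(1+\s)}\,\l^{-1}\|m\|,
\]
and this extra factor $z^{1/(1+\s)}$ is precisely what makes the $h^2$ contribution subdominant: one checks $\tfrac{1}{1+\s}+\tfrac{1}{\iota\s+1+\eta}\ge \tfrac{1+\eta}{\iota\s+1+\eta}$, so after substituting the optimal $\l$ the $h^2$ term decays at least as fast in $z$ as the leading term. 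In other words, the decomposition of the $K$-functional has to be done with \emph{three} pieces, $h^1$ on the $X_0$ side, $Tf_i$ on the $X_1$ side, and $h^2$ further split into $h^{2,1}\in X_0$ and $h^{2,2}\in X_1$ via its own interpolation inequality. Once you insert this splitting of $h^2$, your equilibration $\l^{1+\eta+\iota\s}\sim \|m\|/(z(1+\|u\|))$ becomes valid and produces the claimed $K(z)\lesssim z^{(1+\eta)/(1+\eta+\iota\s)}$ for $z\le1$; together with the trivial bound $K(z)\le\|u^\phi\|_{L^1_t\dot W^{-1,1}_x}$ for $z\ge1$ you then recover $s_*$.
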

\begin{proof}
From \eqref{eq:real_decomp} we have
\[
u^{\phi}=T\partial_vg+\l Tf_1 + Tf_2
\]
with $f_1=\td\chi$, $f_2=\dot\vp \chi$ , $g=\td m$. For simplicity we write $u$ instead of $u^{\phi}$ in the following. We note that 
\begin{align*}
f_1,f_2 & \in L_{t}^{\infty}L_{x,loc}^{1}BV_{v}\\
g=\td m & \in\mcM_{t,x}\mcM_{v,loc},
\end{align*}
with, for $i=1,2$,
\begin{align*}
 \|f_i\|_{L_{t}^{\infty}L_{x,loc}^{1}BV_{v}} \lesssim \|\chi\|_{L_{t}^{\infty}L_{x,loc}^{1}BV_{v}} \lesssim \|u\|_{L^\infty_tL^1_{x,loc}}+1 \\
 \|\td m\|_{\mcM_{t,x}\mcM_{v,loc}} \lesssim  \|m\|_{\mcM_{t,x}\mcM_{v,loc}}.
\end{align*}
 Following Lemma \ref{lem:2} we have
\[
Tg=h^{1}+h^{2}
\]
with, for arbitrary $\ve_{1},\ve_{2}>0$, 
\begin{align*}
\|h^{1}\|_{L_{t}^{1}(\dot{W}_{x}^{-1-\ve_{1},1})} & \lesssim \l^{-1-\eta}\| m\|_{\mcM_{t,x}\mcM_{v,loc}},\\
\|h^{2}\|_{L_{t}^{1}(W_{x}^{-\ve_{2},1})} & \lesssim \l^{-1}\| m\|_{\mcM_{t,x}\mcM_{v,loc}}.
\end{align*}
Since, for $0<\ve_{2}<<\ve_{1}$, 
\[
L_{t}^{1}(W_{x}^{-\ve_{2},1})\hookrightarrow(L_{t}^{1}(\dot{W}_{x}^{-1-\ve_{1},1}),L_{t}^{1}(\dot{W}_{x,loc}^{\s,1}))_{\frac{1}{1+\s},\infty}
\]
with 
\begin{align*}
K(h^{2},z):=\inf\{ & \|h^{2,1}\|_{L_{t}^{1}(\dot{W}_{x}^{-1-\ve_{1},1})}+z\|h^{2,2}\|_{L_{t}^{1}(\dot{W}_{x,loc}^{\s,1})}:\,h^{2}=h^{2,1}+h^{2,2},\\
 & \,h^{2,1}\in L_{t}^{1}(\dot{W}_{x}^{-1-\ve_{1},1}),h^{2,2}\in L_{t}^{1}(\dot{W}_{x,loc}^{\s,1})\}
\end{align*}
we have 
\begin{align*}
\sup_{z>0}z^{-\frac{1}{1+\s}}\,K(h^{2},z) & =:\|h^{2}\|_{(L_{t}^{1}(\dot{W}_{x}^{-1-\ve_{1},1}),L_{t}^{1}(\dot{W}_{x,loc}^{\s,1}))_{\frac{1}{1+\s},\infty}}\\
 & \lesssim\|h^{2}\|_{L_{t}^{1}(W_{x}^{-\ve_{2},1})}\lesssim\l^{-1}\|\td m\|_{\mcM_{t,x}\mcM_{v,loc}}.
\end{align*}
By Lemma \ref{lem:1} we have, $i=1,2$,
\[
\|Tf_i\|_{L_{t}^{1}(\dot{W}_{x,loc}^{\s,1})}\le\l^{-1+\iota\s}\|f_i\|_{L_{t}^{\infty}L_{x,loc}^{1}BV_{v}}.
\]
With  
\begin{align*}
K(u,z):=\inf\{ & \|u^{1}\|_{L_{t}^{1}(\dot{W}_{x}^{-1-\ve_{1},1})}+z\|u^{2}\|_{L_{t}^{1}(\dot{W}_{x,loc}^{\s,1})}:\,u=u^{1}+u^{2},\\
 & u^{1}\in L_{t}^{1}(\dot{W}_{x}^{-1-\ve_{1},1}),\,u^{2}\in L_{t}^{1}(\dot{W}_{x,loc}^{\s,1})\},
\end{align*}
since $u\in L_{t,x}^{1}\subseteq L_{t}^{1}(\dot{W}_{x}^{-1-\ve_{1},1})$ we have the trivial bound
\[
K(u,z)\le\|u\|_{L_{t}^{1}(\dot{W}_{x}^{-1-\ve_{1},1})}.
\]
It it therefore enough to restrict to $z\le1$ in the following estimates. For $h^{2}=h^{2,1}+h^{2,2}$ with $h^{2,1}\in L_{t}^{1}(\dot{W}_{x}^{-1-\ve_{1},1})$, $h^{2,2}\in L_{t}^{1}(\dot{W}_{x,loc}^{\s,1})$ we have
\begin{align*}
u & =h^{1}+h^{2}+\l Tf_1+ Tf_2\\
 & =h^{1}+h^{2,1}+h^{2,2}+\l Tf_1 + Tf_2
\end{align*}
and thus
\begin{align*}
K(u,z) & \le\|h^{1}+h^{2,1}\|_{L_{t}^{1}(\dot{W}_{x}^{-1-\ve_{1},1})}+z\|h^{2,2}+ \l Tf_1 + Tf_2\|_{L_{t}^{1}(\dot{W}_{x,loc}^{\s,1})}\\
 & \le\|h^{1}\|_{L_{t}^{1}(\dot{W}_{x}^{-1-\ve_{1},1})}+\|h^{2,1}\|_{L_{t}^{1}(\dot{W}_{x}^{-1-\ve_{1},1})}\\
 &+z\|h^{2,2}\|_{L_{t}^{1}(\dot{W}_{x,loc}^{\s,1})}+z\|\l Tf_1\|_{L_{t}^{1}(\dot{W}_{x,loc}^{\s,1})}+z\| Tf_2\|_{L_{t}^{1}(\dot{W}_{x,loc}^{\s,1})}.
\end{align*}
By definition of $K(h^{2},z)$ this yields
\begin{align*}
K(u,z) & \le\|h^{1}\|_{L_{t}^{1}(\dot{W}_{x}^{-1-\ve_{1},1})}+K(h^{2},z)+z\|\l Tf_1\|_{L_{t}^{1}(\dot{W}_{x,loc}^{\s,1})}+z\| Tf_2\|_{L_{t}^{1}(\dot{W}_{x,loc}^{\s,1})}\\
 & \lesssim\l^{-1-\eta}\| m\|_{\mcM_{t,x}\mcM_{v,loc}}+z^{\frac{1}{1+\s}}\l^{-1}\| m\|_{\mcM_{t,x}\mcM_{v,loc}}\\
 &+z\l^{\iota\s}\|\chi\|_{L_{t}^{\infty}L_{x,loc}^{1}BV_{v}}+z\l^{\iota\s-1}\|\chi\|_{L_{t}^{\infty}L_{x,loc}^{1}BV_{v}}.
\end{align*}
Equilibrating the first and third term yields (without loss of generality we may assume in the following that $\|\chi\|_{L_{t}^{\infty}L_{x,loc}^{1}BV_{v}}, \| m\|_{\mcM_{t,x}\mcM_{v,loc}} \ge 1$)
\[
\l^{-1-\eta}\|m\|_{\mcM_{t,x}\mcM_{v,loc}}=z\l^{\iota\s}\|\chi\|_{L_{t}^{\infty}L_{x,loc}^{1}BV_{v}}
\]
that is
\[
\l=z^{-\frac{1}{\iota\s+1+\eta}}\|m\|_{\mcM_{t,x}\mcM_{v,loc}}^{\frac{1}{\iota\s+1+\eta}}\|\chi\|_{L_{t}^{\infty}L_{x,loc}^{1}BV_{v}}^{-\frac{1}{\iota\s+1+\eta}}.
\]
Hence,
\begin{align*}
\l^{-1-\eta}&=z^{\frac{1+\eta}{\iota\s+1+\eta}}\|m\|_{\mcM_{t,x}\mcM_{v,loc}}^{-\frac{1+\eta}{\iota\s+1+\eta}}\|\chi\|_{L_{t}^{\infty}L_{x,loc}^{1}BV_{v}}^{\frac{1+\eta}{\iota\s+1+\eta}}\\
z^{\frac{1}{1+\s}}\l^{-1} & 
=z^{\frac{1}{1+\s}+\frac{1}{\iota\s+1+\eta}}\|m\|_{\mcM_{t,x}\mcM_{v,loc}}^{-\frac{1}{\iota\s+1+\eta}}\|\chi\|_{L_{t}^{\infty}L_{x,loc}^{1}BV_{v}}^{\frac{1}{\iota\s+1+\eta}}\\
z\l^{\iota\s-1}&= z^\frac{2+\eta}{\iota\s+1+\eta} \|m\|_{\mcM_{t,x}\mcM_{v,loc}}^{\frac{\iota\s-1}{\iota\s+1+\eta}}\|\chi\|_{L_{t}^{\infty}L_{x,loc}^{1}BV_{v}}^{-\frac{\iota\s-1}{\iota\s+1+\eta}}.
\end{align*}
We obtain 

\begin{align*}
K(u,z) & \lesssim z^{\frac{1+\eta}{\iota\s+1+\eta}}\|m\|_{\mcM_{t,x}\mcM_{v,loc}}^{\frac{\iota\s}{\iota\s+1+\eta}}\|\chi\|_{L_{t}^{\infty}L_{x,loc}^{1}BV_{v}}^{\frac{1+\eta}{\iota\s+1+\eta}}\\
 & +z^{\frac{1}{1+\s}+\frac{1}{\iota\s+1+\eta}}\|m\|_{\mcM_{t,x}\mcM_{v,loc}}^{\frac{\iota\s+\eta}{\iota\s+1+\eta}}\|\chi\|_{L_{t}^{\infty}L_{x,loc}^{1}BV_{v}}^{\frac{1}{\iota\s+1+\eta}}\\
 &+z^\frac{2+\eta}{\iota\s+1\eta} \|m\|_{\mcM_{t,x}\mcM_{v,loc}}^{\frac{\iota\s-1}{\iota\s+1+\eta}}\|\chi\|_{L_{t}^{\infty}L_{x,loc}^{1}BV_{v}}^\frac{2+\eta}{\iota\s+1+\eta}.
\end{align*}

Since, for $\s\in(0,1)$,  $\frac{1+\eta}{\iota\s+1+\eta}\le\frac{1}{1+\s}+\frac{1}{\iota\s+1+\eta}$ and $|z|\le1$ we obtain
\begin{align}
K(u,z) & \lesssim z^{\frac{1+\eta}{\iota\s+1+\eta}}\|\chi\|_{L_{t}^{\infty}L_{x,loc}^{1}BV_{v}}^{\frac{1+\eta}{\iota\s+1+\eta}}\|m\|_{\mcM_{t,x}\mcM_{v,loc}}^{\frac{\iota\s+\eta}{\iota\s+1+\eta}}.
\end{align}
We conclude, with $\theta=\frac{1+\eta}{\iota\s+1+\eta}$,
\begin{align*}
 & \|u\|_{(L_{t}^{1}(\dot{W}_{x}^{-1-\ve_{1},1}),L_{t}^{1}(\dot{W}_{x,loc}^{\s,1}))_{\theta,\infty}}\\
 & =\sup_{z>0}z^{-\t}K(u,z)\\
 & =\sup_{1>z>0}z^{-\t}K(u,z)+\sup_{z\ge1}z^{-\t}K(u,z)\\
 & \le \|\chi\|_{L_{t}^{\infty}L_{x,loc}^{1}BV_{v}}^{\frac{1+\eta}{\iota\s+1+\eta}}\|m\|_{\mcM_{t,x}\mcM_{v,loc}}^{\frac{\iota\s+\eta}{\iota\s+1+\eta}}+\|u\|_{L_{t}^{1}(\dot{W}_{x}^{-1-\ve_{1},1})}.
\end{align*}
Interpolation gives, choosing $\ve_{1}>0$ small enough,
\begin{align*}
(L_{t}^{1}(\dot{W}_{x}^{-1-\ve_{1},1}),L_{t}^{1}(\dot{W}_{x,loc}^{\s,1}))_{\theta,\infty} & \subseteq L_{t}^{1}(\dot{W}_{x,loc}^{s,r})
\end{align*}
for every
\begin{align*}
s & <(1-\theta)(-1)+\s\theta=\s\left(\frac{1+\eta-\iota}{1+\eta+\iota\sigma}\right)\\
\frac{1}{r} & =\frac{1-\theta}{1}+\theta=1.
\end{align*}
In conclusion, since $\s\in(0,1)$ arbitrary (large enough), for every $s<s_{*}=\frac{1+\eta-\iota}{1+\eta+\iota}$,
\begin{align*}
\|u\|_{L_{t}^{1}(\dot{W}_{x,loc}^{s,1})} & \le \|\chi\|_{L_{t}^{\infty}L_{x,loc}^{1}BV_{v}}^{\frac{1+\eta}{\iota+1+\eta}}\|m\|_{\mcM_{t,x}\mcM_{v,loc}}^{\frac{\iota+\eta}{\iota+1+\eta}}+\|u\|_{L_{t}^{1}(\dot{W}_{x}^{-1,1})}.
\end{align*}

\end{proof}
\bibliographystyle{abbrv}
\bibliography{/home/bgess/Dropbox/bgess/cloud/current_work/latex-refs/refs}

\end{document}